\documentclass{amsart}
\usepackage{amssymb,amsmath,amscd,xy,graphicx,textcomp,mathtools}
\usepackage{epsf} 
\usepackage{titlepic}
\usepackage{hyperref}
\usepackage{tabulary}
\usepackage{booktabs}

\hypersetup{
  colorlinks   = true, 
  urlcolor     = blue, 
  linkcolor    = blue, 
  citecolor   = blue 
}

\newtheorem{theorem}{Theorem}[section]
\newtheorem{lemma}[theorem]{Lemma}
\newtheorem{corollary}[theorem]{Corollary}
\newtheorem{definition}[theorem]{Definition}

\newtheorem{example}[theorem]{Example}
\newtheorem{proposition}[theorem]{Proposition}

\xyoption{arrow}

\xyoption{matrix}

\setcounter{tocdepth}{1}


\def\pr{\mathrm{pr}}
\def\gr{\mathrm{gr}}
\def\BZ{\mathrm{BZ}}
\def\conv{\mathrm{conv}}
\def\cone{\mathrm{cone}}
\def\Proj{\mathrm{Proj}}

\def\gr{\mathrm{gr}}

\def\GL{\mathrm{GL}}

\def\SL{\mathrm{SL}}

\def\sl{\mathfrak{sl}}

\def\C{\mathbb{C}}
\def\R{\mathbb{R}}
\def\Z{\mathbb{Z}}
\def\N{\mathbb{N}}

\def\tree{\mathcal{T}}

\def\Proj{\mathrm{Proj} \,}

\begin{document}

\title[Conformal blocks, BZ triangles and group-based models]{Conformal blocks, Berenstein-Zelevinsky triangles and group-based models}

\author{Kaie Kubjas, Christopher Manon}
\date{\today}

\begin{abstract}
Work of Buczy{\'n}ska, Wi{\'s}niewski, Sturmfels and Xu, and the second author has linked the group-based phylogenetic statistical model associated with the group $\Z/2\Z$ with the Wess-Zumino-Witten (WZW) model of conformal field theory associated to $\SL_2(\C)$. In this article we explain how this connection can be generalized to establish a relationship between the phylogenetic statistical model for the cyclic group $\Z/m\Z$ and the WZW model for the special linear group $\SL_m(\C).$ We use this relationship to also show how a combinatorial device from representation theory, the Berenstein-Zelevinsky (BZ) triangles, correspond to elements in the affine semigroup algebra of the $\Z/3\Z$ phylogenetic statistical model.  

\end{abstract}

\maketitle

\tableofcontents

\section{Introduction}

In this paper we study an algebraic relationship between two branches of applied mathematics, phylogenetic algebraic geometry and the Wess-Zumino-Witten model of conformal field theory.

Phylogenetics utilizes algebraic varieties associated with a statistical model in order to construct phylogenies between empirically observed taxa. A special class of these models, the group-based models, derive some of their parameters from a finite abelian group $A$. There is one such model for each graph $\Gamma$ and finite abelian group $A.$  We consider the coordinate algebra of the variety  $M_{\Gamma}(A)$ associated with this model. 

The Wess-Zumino-Witten model of conformal field theory constructs a vector space $V_{C, \vec{p}}(\vec{\lambda}, L)$ for each stable marked projective curve $C, \vec{p},$ and tuple of representation theoretic data $\vec{\lambda}, L$ for a simple Lie algebra $\mathfrak{g}$.   Summing over the possible values of the data $\vec{\lambda}, L$ produces an infinite dimensional vector space $V_{C, \vec{p}}(G)$ which carries the structure of a commutative algebra.  This algebra is the total coordinate ring of the moduli space $\mathcal{M}_{C, \vec{p}}(G)$ of principal $G$ bundles on the curve $C$ with quasi-parabolic structure at marked points $\vec{p} \subset C.$

A connection between these objects was first made in the paper \cite{SX10} of Sturmfels and Xu, where they construct a flat degeneration of $V_{C, \vec{p}}(\SL_2(\C))$ to the coordinate ring of each $M_{\tree}(\Z/2\Z)$ for $C$, a generic $n$-pointed rational curve, and $\tree$, a trivalent tree with $n$ leaves.   This follows work of Buczy{\'n}ska and Wi{\'s}niewski,  who proved in \cite{BW07} that the Hilbert functions of these spaces all agree.   The result of Buczy{\'n}ska and Wi{\'s}niewski left open the question whether or not the toric ideals of the varieties $M_{\tree}(\Z/2\Z)$ lie in the same irreducible component of their Hilbert scheme, this question is answered in the positive by Sturmfels and Xu's result. 

In \cite{Buczynska12}, Buczy{\'n}ska goes on to define a statistical model $M_{\Gamma}(\Z/2\Z)$ for every trivalent graph $\Gamma$, and shows that the Hilbert function of this space depends only on the first Betti number and number of leaves of $\Gamma.$   Corresponding structures appear in \cite{Manon09}, where the second author constructs a flat degeneration $V_{\Gamma}(G)$ of the algebra $V_{C, \vec{p}}(G)$ for each graph $\Gamma$ with $n$ leaves and first Betti number $g$ equal to the genus of $C.$   A higher genus version of the theorem of Sturmfels and Xu is a corollary of these results, namely the algebra $V_{\Gamma}(\SL_2(\C))$ is isomorphic to the coordinate ring of $M_{\Gamma}(\Z/2\Z)$.  In this paper we generalize this relationship to $\Z/m\Z$ and $\SL_m(\C).$  The algebra $V_{\Gamma}(G)$ carries an action of $T^{|E(\Gamma)|}$, where $T \subset G$ is a maximal torus, and $E(\Gamma)$ is the edge set of $\Gamma,$ we let $S_{\Gamma}(G)$ denote the semigroup of the characters for this action, and $\pi_{\Gamma}: V_{\Gamma}(G) \to \C[S_{\Gamma}(G)]$ be the projection map onto the corresponding semigroup algebra.  The following is our main theorem.

\begin{theorem}\label{main}
There is an inclusion of semigroup algebras. 

\begin{equation}
\C[M_{\Gamma}(\Z/m\Z)] \subset \C[S_{\Gamma}(\SL_m(\C))]\\
\end{equation}

\end{theorem}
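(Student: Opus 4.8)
The strategy is to realize the asserted inclusion already at the level of the underlying affine semigroups and then apply $\C[-]$. The plan relies on two combinatorial descriptions that should be available from the preceding sections. (i) After the discrete Fourier transform, $M_\Gamma(\Z/m\Z)$ is the affine semigroup inside $\N^{E(\Gamma) \times \Z/m\Z}$ generated by the edge-labelings $\phi : E(\Gamma) \to \Z/m\Z$ that satisfy, at each internal vertex $v$, a balancing condition $\sum_{e \ni v} \pm \phi(e) \equiv 0 \pmod m$ (signs fixed by an orientation of the edges); thus an element $\sigma$ of this semigroup is recorded by a tuple of functions $\sigma(e,\cdot) : \Z/m\Z \to \N$, all of the same total $d := |\sigma|$. (ii) The semigroup $S_\Gamma(\SL_m(\C))$ consists of tuples $(L;\, (\lambda_e)_{e \in E(\Gamma)})$ with $L \geq 1$, each $\lambda_e$ a dominant weight of $\SL_m$ of level at most $L$, such that the space of conformal blocks $V_{\P^1}\bigl((\lambda_e)_{e \ni v};\, L\bigr)$ is nonzero at every internal vertex $v$, with semigroup operation adding levels and weights.

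First I would define $\Phi : M_\Gamma(\Z/m\Z) \to S_\Gamma(\SL_m(\C))$ by $\sigma \mapsto \bigl(d;\, (\lambda_e)_e\bigr)$ with $\lambda_e := \sum_{k \in \Z/m\Z} \sigma(e,k)\, \omega_k$, where $\omega_0 = 0$ and $\omega_1, \dots, \omega_{m-1}$ are the minuscule fundamental weights of $\SL_m$. Since both the level and the weights add, $\Phi$ is a homomorphism of semigroups. The level constraint at each edge is automatic: since $\langle \omega_k, \theta^\vee \rangle = 1$ for every $k = 1, \dots, m-1$ (here $\theta$ is the highest root of $\sl_m$), one gets $\langle \lambda_e, \theta^\vee\rangle = d - \sigma(e,0) \leq d$.

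The essential point is the vertex condition, and it rests on the fact that the level-one fusion ring of $\SL_m(\C)$ is the group ring $\Z[\Z/m\Z]$: the integrable level-one weights are precisely $\omega_0, \dots, \omega_{m-1}$, their fusion product is $\omega_a \cdot \omega_b = \omega_{a+b \bmod m}$, and hence $\dim V_{\P^1}\bigl((\omega_{a_i})_i;\, 1\bigr)$ equals $1$ if $\sum_i a_i \equiv 0 \pmod m$ and $0$ otherwise. Matching the sign conventions of the balancing condition with the duality $V_{\omega_k}^* \cong V_{\omega_{m-k}}$ built into the gluing along internal edges, this shows that for each generator $\phi$ the element $\Phi(\phi) = \bigl(1;\, (\omega_{\phi(e)})_e\bigr)$ already lies in $S_\Gamma(\SL_m(\C))$. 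For an arbitrary $\sigma$, write $\sigma = \sum_{j=1}^{d} \phi_j$ as a sum of generators; then $\lambda_e = \sum_j \omega_{\phi_j(e)}$, so $\Phi(\sigma) = \sum_j \Phi(\phi_j) \in S_\Gamma(\SL_m(\C))$ because $S_\Gamma(\SL_m(\C))$ is closed under its operation — concretely, the nonvanishing of $V_{\P^1}\bigl((\lambda_e)_{e \ni v};\, d\bigr)$ at each vertex follows from that of the level-one pieces $V_{\P^1}\bigl((\omega_{\phi_j(e)})_{e \ni v};\, 1\bigr)$ via the domain property of the three-pointed conformal blocks algebra. Injectivity of $\Phi$ is immediate: from $(\lambda_e)_e$ and $d$ one recovers $\sigma(e,1), \dots, \sigma(e,m-1)$ by the linear independence of $\omega_1, \dots, \omega_{m-1}$, and then $\sigma(e,0) = d - \sum_{k \geq 1} \sigma(e,k)$. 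Applying $\C[-]$ to the injective semigroup homomorphism $\Phi$ produces the claimed inclusion $\C[M_\Gamma(\Z/m\Z)] \hookrightarrow \C[S_\Gamma(\SL_m(\C))]$.

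I expect the main obstacle to be bookkeeping rather than anything conceptual: pinning down conventions so that the balancing conditions of the group-based model match the vanishing conditions for conformal blocks of minuscule weights — including the dualizations forced by edge orientations and the contraction along internal edges — and so that the ``degree'' grading on $M_\Gamma(\Z/m\Z)$ agrees with the ``level'' grading on $S_\Gamma(\SL_m(\C))$. Once the descriptions (i) and (ii) are in place, everything reduces to the level-one fusion computation and the linear independence of the fundamental weights. A secondary point is to confirm that these two descriptions hold in whatever class of graphs the theorem concerns (trees, or all graphs with prescribed first Betti number); the level-one input is unaffected, since the level-one $n$-point conformal blocks of $\SL_m$ for any $n$ are governed by the abelian group ring $\Z[\Z/m\Z]$.
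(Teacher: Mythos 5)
Your degree--one analysis is essentially the paper's: the generators of the phylogenetic semigroup correspond to level~$1$ conformal blocks, and the nonvanishing criterion $\sum_i a_i \equiv 0 \pmod m$ is exactly what the paper derives from $\mathcal{W}_m/\mathcal{R}_m \cong \Z/m\Z$ (its Propositions on $[V(\omega_i)\otimes V(\omega_j)\otimes V(\omega_k)]^{\SL_m(\C)}$ and Theorem~\ref{theorem:dimensions_of_degree_one_components}). However, there is a genuine gap in how you get from the degree--one statement to $\C[M_{\Gamma}(\Z/m\Z)]$. Your description (i) of the relevant semigroup as the one \emph{generated} by the admissible edge-labelings is not the object whose algebra is $\C[M_{\Gamma}(\Z/m\Z)]$. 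The paper defines $M_{\Gamma}(A) = \Proj(\C[\cone(R_{\Gamma}(A))\cap L_{\Gamma}^{\gr}(A)])$, i.e.\ the coordinate ring is the algebra of the \emph{saturation} $\overline{R}_{\Gamma}(\Z/m\Z)$ (Proposition~\ref{prop:phylogenetic_semigroup_is_saturation_of_our_semigroup}). Elements of the saturation need not be sums of degree--one generators, so your step ``write $\sigma = \sum_{j=1}^{d}\phi_j$ as a sum of generators'' fails for exactly the elements that distinguish $\overline{R}_{\Gamma}$ from the degree--one--generated subsemigroup. This is not a bookkeeping issue: closing the gap requires knowing that $S_{\Gamma}(\SL_m(\C))$ is itself saturated, which the paper imports from a deep theorem of Belkale; with that in hand, $R_{\Gamma}\subset S_{\Gamma}$ immediately upgrades to $\overline{R}_{\Gamma}\subset S_{\Gamma}$. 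Your proof never addresses this, and there is no elementary substitute for it.

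A second, more repairable, defect concerns graphs with positive first Betti number. Even before saturating, $R_{\Gamma}(\Z/m\Z)$ is defined as $R_{\tree}(\Z/m\Z)$ intersected with the gluing hyperplanes $\ker(x_a^{\overline{e_i}}-x_{-a}^{\underline{e_i}})$, and such an intersection is generally \emph{not} generated in degree one (for $\Z/2\Z$ generators of degree up to $g+1$ are needed). So your decomposition into generators is unavailable even for $\sigma\in R_{\Gamma}$. The paper's workaround is to decompose on the covering forest $\hat{\Gamma}$, where everything is degree--one generated, prove $R_{\hat{\Gamma}}\subset S_{\hat{\Gamma}}$ there, and then intersect both sides with the duality subcones $H_{e,e'}$; the key point making this legitimate is that membership in $S_{\Gamma}(\SL_m(\C))$ is a purely local (vertexwise) nonvanishing condition together with the edge dualities. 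You should restructure your argument along those lines and then add the saturation step.
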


The results of Buczy{\'n}ska and Wi{\'s}niewski on the group $\Z/2\Z$ lead one to wonder whether or not the Hilbert functions of other group-based phylogenetic statistical models are invariant of the graph $\Gamma.$  This question was addressed by the first author in~\cite{Kubjas12}, where it was shown that this property fails for the group $\Z/2\Z \times \Z/2\Z.$ Donten-Bury and Micha{\l}ek then showed in~\cite{DBM12} that this property also fails for the group $\Z/2\Z \times \Z/2\Z \times \Z/2\Z$ and the cyclic groups $\Z/m\Z$ for $m = 3,4,5,7,8,9$. We observe in Theorem~\ref{theorem:dimensions_of_degree_one_components} that the first value of the Hilbert function is always independent of the graph $\Gamma$.

We prove Theorem \ref{main} by carefully studying the conformal blocks of level $1$ for the special linear groups.  The WZW model is closely related to the classical representation theory of $\SL_m(\C)$ (see for example \cite[Corollary 6.2]{U}), this allows us to transfer the combinatorial devices of $\SL_m(\C)$ into the theory of phylogenetic statistical models. We will show that there is a natural correspondence between the degree one elements of the phylogenetic semigroup associated with $\Z/m\Z$ and a set of so-called $\SL_m(\C)$ BZ triangles. The latter objects are tools for counting $\SL_m(\C)$ tensor product multiplicities. As a consequence, we obtain a graphical interpretation of the degree one elements of these phylogenetic semigroups. Moreover, we show that a graded semigroup algebra $\C[\BZ_{\Gamma}^{\gr}(\SL_3(\C))]$ of these BZ triangles surjects onto the $\Z/3\Z$ phylogenetic statistical models. 

\begin{theorem}\label{BZ}
There is a surjective map of semigroup algebras. 

\begin{equation}
\phi_{\Gamma}: \C[\BZ_{\Gamma}^{\gr}(\SL_3(\C))] \to \C[M_{\Gamma}(\Z/3\Z)]\\
\end{equation}

\end{theorem}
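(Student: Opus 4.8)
The plan is to construct the map $\phi_\Gamma$ first on the level of degree-one generators and then argue it extends to a well-defined surjection of semigroup algebras.

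First, I would recall the explicit combinatorial description of $M_\Gamma(\Z/3\Z)$: its semigroup consists of edge-labelings of $\Gamma$ by elements of $\Z/3\Z$ satisfying a "sum to zero" (flow) condition at each internal vertex, graded by a choice of level/degree. The key input from the earlier part of the paper is the natural correspondence — asserted in the introduction — between the degree-one elements of the $\Z/m\Z$ phylogenetic semigroup and the set of $\SL_m(\C)$ BZ triangles compatible with the graph $\Gamma$; for $m=3$ this gives a bijection between degree-one generators of $\C[M_\Gamma(\Z/3\Z)]$ and degree-one elements of $\BZ_\Gamma^{\gr}(\SL_3(\C))$. I would define $\phi_\Gamma$ on a degree-one basis element of $\C[\BZ_\Gamma^{\gr}(\SL_3(\C))]$ to be the corresponding degree-one element of the phylogenetic semigroup, and extend multiplicatively.

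Next, I would check that this assignment descends to a homomorphism of semigroup algebras. The subtlety is that $\BZ_\Gamma^{\gr}(\SL_3(\C))$ need not be generated in degree one, and even among degree-one generators there may be additive relations in $M_\Gamma(\Z/3\Z)$ that are not reflected in $\BZ_\Gamma^{\gr}(\SL_3(\C))$, or vice versa. To handle this, I would use the compatibility of the BZ-triangle additive structure with the semigroup structure on conformal blocks: BZ triangles for $\SL_3(\C)$ add coordinatewise, and this addition is compatible with the tensor product decomposition underlying $V_\Gamma(\SL_3(\C))$, hence — via Theorem \ref{main} and the projection $\pi_\Gamma$ onto $\C[S_\Gamma(\SL_3(\C))]$ — with the addition in $S_\Gamma(\SL_3(\C))$, into which $M_\Gamma(\Z/3\Z)$ embeds. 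So $\phi_\Gamma$ can be realized as the restriction of a map on ambient semigroups, which guarantees it is a well-defined algebra homomorphism. Surjectivity then follows because $\C[M_\Gamma(\Z/3\Z)]$ is generated in degree one (this is a standard feature of the trivalent-graph phylogenetic semigroups, and should be cited or verified from the flow description), and every degree-one generator is hit by construction.

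The main obstacle I anticipate is precisely the well-definedness: showing that the coordinatewise addition of BZ triangles maps consistently onto the edge-labeling addition in the phylogenetic semigroup, i.e.\ that the generator-level bijection is in fact a semigroup homomorphism on the sub-semigroup it generates. This requires understanding how a BZ triangle records the weight data on the three "legs" at each trivalent vertex and checking that the induced $\Z/3\Z$ flow condition is additive under triangle addition — a local check at each vertex, glued along edges. A secondary point requiring care is the grading: one must ensure $\phi_\Gamma$ is graded, which amounts to matching the level grading on $\BZ_\Gamma^{\gr}(\SL_3(\C))$ with the degree grading on $M_\Gamma(\Z/3\Z)$; this should be immediate from the $m=3$ level-one identification but deserves an explicit sentence.
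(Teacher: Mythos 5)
Your overall strategy matches the paper's: the map is the boundary projection $\pr$ (restricted from the ambient lattice of BZ triangles to the lattice of edge-labelings), so well-definedness is automatic because $\pr$ is linear on $L_m$ by construction — you do not need the detour through $S_{\Gamma}(\SL_3(\C))$ and Theorem~\ref{main} to see this, and the paper indeed just says the map is given by ``passing to the boundaries.'' The generator-level bijection you invoke is exactly Theorem~\ref{theorem:BZ_triangles_group_based_models_inclusion} (equality $\pr(\BZ(\SL_3(\C)))=R^{\pr}_{0,3}(\Z/3\Z)$ for $m=3$), and the gluing along edges is Corollary~\ref{cor:connection_on_trees}. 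So far, same proof.

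The genuine gap is your surjectivity argument. You claim that $\C[M_{\Gamma}(\Z/3\Z)]$ is generated in degree one as ``a standard feature of the trivalent-graph phylogenetic semigroups.'' This is false once $\Gamma$ has positive first Betti number: the paper itself cites \cite{BBKM13} for the fact that already for $\Z/2\Z$ the semigroup algebra on a genus $g$ graph requires generators of degree up to $g+1$, and the analogous failure for $\SL_3(\C)$/$\Z/3\Z$ is the reason \cite{Manon12} needs level $1,2,3$ generators for $V_{\Gamma_{1,n}}(\SL_3(\C))$. Since $M_{\Gamma}(\Z/3\Z)=\Proj(\C[\overline{R}_{\Gamma}(\Z/3\Z)])$ (Corollary~\ref{cor:phylogenetic_model_equals_the_proj_of_the_saturation_of_our_semigroup}), the target genuinely contains elements not reachable as products of degree-one classes, so ``every degree-one generator is hit'' does not finish the argument. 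The correct route, which is the one the paper takes, is to prove surjectivity degree by degree rather than generator by generator: at each vertex the equality $\pr(\BZ(\SL_3(\C)))=R^{\pr}_{0,3}(\Z/3\Z)$ holds as an equality of semigroups (every element of the phylogenetic semigroup, in every degree, lifts to some BZ triangle), and the fibre-product descriptions of $\BZ_{\Gamma}^{\gr}(\SL_3(\C))$ and of $R_{\Gamma}(\Z/3\Z)$ (Lemma~\ref{lemma:group_based_semigroup}) match under the edge-matching kernels, so lifts at the vertices can be chosen compatibly along edges exactly as in Corollary~\ref{cor:connection_on_trees}. Note that $\BZ_{\Gamma}^{\gr}(\SL_3(\C))$ itself, being a fibre product of degree-one-generated semigroups, is not degree-one generated in general, which is precisely what supplies the preimages of the higher-degree generators of the target; your ``extend multiplicatively from degree one'' construction would not even define the map on those elements.
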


This allows a convenient graphical device for studying the phylogenetic statistical model.  We restrict ourselves to the case $m = 3$ here because, the map $\phi_{\Gamma}$ is an isomorphism for $m = 2$ by the results in \cite{SX10} and \cite{Manon09}, and it is not yet known if $V_{C, \vec{p}}(\SL_m(\C))$ has the combinatorial features enjoyed by the $m =2, 3$ cases in general.  In particular a toric degeneration of this algebra has not been found.   As a consequence of the techniques presented in this paper, such a degeneration would also provide convenient graphical devices for studying the coordinate algebras of $M_{\Gamma}(\Z/m\Z).$ 

Graphical interpretations of BZ triangles have been crucial for solving problems in representation theory~\cite{KT99,GP00,KTW04}. We hope that the relation between BZ triangles and group-based models, and the graphical interpretation of group-based semigroups will provide a new approach to tackle open problems in phylogenetic algebraic geometry such as the Sturmfels-Sullivant conjecture about the maximal degrees of minimal generators of group-based ideals~\cite[Conjecture~29]{SS05}.

Sections~\ref{section:conformal_blocks_I} to~\ref{section:phylogenetic_algebraic_geometry} will be introductory chapters defining basic objects in this article. In Section~\ref{section:conformal_blocks_I}, we will define conformal block algebras, and in Section~\ref{section:conformal_blocks_II}, we will study their representation theoretic properties. In Section~\ref{section:phylogenetic_algebraic_geometry}, the introduction to phylogenetic algebraic geometry and to group-based models will be given. In Section~\ref{section:conformal_block_algebras_and_group_based_models}, we will prove Theorem~\ref{main}. In Section~\ref{section:BZ_triangles_and_group_based_models}, we will establish a connection between BZ triangles and group-based models, and prove Theorem~\ref{BZ}.

\begin{table}[htbp]
\begin{center}
\begin{tabular}{l p{9.7cm}}
\toprule
$\tree$ & a tree\\
$\Gamma$ & a graph\\
$E(\Gamma)$ & the set of edges of $\Gamma$\\
$V(\Gamma)$ & the set of non-leaf vertices of $\Gamma$\\
$M_{\Gamma}(A)$ & the algebraic variety associated with a group-based model with abelian group $A$ and graph $\Gamma$\\
$P_{\tree}(A)$ & the polytope associated with a group-based model with abelian group $A$ and tree $\tree$\\
$L_{\tree}(A)$ & the lattice generated by the vertices of $P_{\tree}(A)$\\
$R_{\Gamma}(A)$ & the graded affine semigroup associated with a group-based model with abelian group $A$ and graph $\Gamma$\\
$L_{\Gamma}^{\gr}(A)$ & the graded lattice associated with a group-based model with abelian group $A$ and graph $\Gamma$\\
$\overline{R}_{\Gamma}(A)$ & the saturation of $R_{\Gamma}(A)$\\
$R^{\pr}_{\Gamma}(A)$ & the projection of $R_{\Gamma}(A)$ that forgets the grading\\
$\SL_m(\C)$ & the special linear group\\
$\sl_m(\C)$ & the Lie algebra of $\SL_m(\C)$\\
$\omega_i$ & a fundamental weight of $\sl_m(\C)$\\
$\Delta$ & a Weyl chamber of $\sl_m(\C)$\\
$\lambda \in \Delta$ & a dominant weight of $\sl_m(\C)$\\
$V(\lambda)$ & the irreducible representation of $\SL_m(\C)$ with highest weight $\lambda$\\
$L \in \Z_{\geq 0}$ & the level\\
$\Delta_L$ & the level $L$ alcove of $\Delta$\\
$\alpha_{ij}$ & a positive root of $\sl_m(\C)$\\
$\mathcal{W}_m$ & the weight lattice of $\sl_m(\C)$\\
$\mathcal{R}_m$ & the root lattice of $\sl_m(\C)$\\
$\overline{\mathcal{M}}_{g, n}$ & the moduli of stable genus $g$, $n$-marked curves\\
$(C, \vec{p}) \in \overline{\mathcal{M}}_{g, n}$ & a $n$-marked genus $g$ stable curve\\
$V_{C, \vec{p}}(\vec{\lambda}, L)$ & the space of $\vec{\lambda}$ conformal blocks on $(C, \vec{p})$ of level $L$\\
$V_{C, \vec{p}}(G)$ & the direct sum of the spaces of conformal blocks on $(C, \vec{p})$\\
$V_{\Gamma}(G)$ & the degeneration of $V_{C, \vec{p}}(G)$ associated to $\Gamma$\\
$V(G)$ & the algebra of conformal blocks\\
$S_{\Gamma}(\SL_m(\C))$ & the affine semigroup associated to $V_{\Gamma}(\SL_m(\C))$\\
$\BZ(\SL_m(\C))$ & the affine semigroup of BZ triangles for $\SL_m(\C)$\\
$\pr(\BZ(\SL_m(\C)))$ & the projection of $\BZ(\SL_m(\C))$ to boundaries\\
$\BZ^{\gr}(\SL_m(\C))$ & the graded affine semigroup of BZ triangles for $\SL_m(\C)$\\
\bottomrule
\end{tabular}
\end{center}
\label{tab:TableOfNotation}
\caption{Table of notation}
\end{table}

\section{Essentials of conformal blocks}\label{section:conformal_blocks_I}

\subsection{Basics of $\SL_m(\C)$ representation theory}

We recall a few basics of the representation theory of the special linear group and its Lie algebra, for what follows see the book of Fulton and Harris, \cite{FH}. Recall that $\SL_m(\C)$ is the set of $m \times m$ invertible matrices with determinant $1$, and its Lie algebra $\sl_m(\C)$ is the set of $m\times m$ matrices with trace $0.$ A representation of these objects is a vector space $V$ along with a map of groups $\SL_m(\C) \to \GL(V),$ and respectively a map of Lie algebras $\sl_m(\C) \to \mathrm{M}_{m\times m}(\C).$ 

Representations form a category, and it is a classical result that the categories of finite dimensional representations of $\SL_m(\C)$ and $\sl_m(\C)$ are the same. Representations can be combined in a number of ways, most important to us are the direct sum $V \oplus W$ and the tensor product $V\otimes W,$ which are given by the expected operations on the underlying vector spaces. 

When a representation has no proper sub-representations, it is said to be irreducible.  It can be shown that any representation in $\mathrm{Rep}(m)$ can be written as a direct sum of irreducible representations in a unique way. For this reason, a classification of the irreducible representations of $\SL_m(\C)$ suffices to describe $\mathrm{Rep}(m).$ These irreducibles are in bijection with decreasing lists of non-negative integers of length $m$ which end in $0$.

\begin{equation}
(\lambda_1, \ldots, \lambda_{m-1},0) = \lambda \to V(\lambda)\\
\end{equation}

Such lists are known as weights, and they are the integer points of simplicial cone $\Delta.$ This set is generated under addition by the weights $\omega_1 = (1, \ldots,0), \omega_2 = (1, 1, \ldots,0), \ldots, \omega_{m-1} = (1, \ldots, 1,0).$  These are known as fundamental weights, and they correspond to the exterior powers of $\C^m$. 

\begin{equation}
V(\omega_k) = \bigwedge^k(\C^m)\\
\end{equation}

Any representation $V(\lambda)$ can be viewed as a $(\C^*)^{m-1}$ representation by restricting
the action of $\SL_m(\C)$ to its maximal torus of diagonal matrices.  Since this is an abelian group, 
$V(\lambda)$ decomposes into $1$-dimensional $(\C^*)^{m-1}$ isotypical spaces, called weight spaces. 
Each such space is labelled by a corresponding $(\C^*)^{m-1}$ character, which can be identified with an $(m-1)$-tuple of integers.  The set of all such tuples forms a lattice $\mathcal{W}_m$ in the linear space $\R^m/(1, \ldots, 1)\R$ called the weight lattice.  This lattice is generated by the fundamental weights $\omega_i.$

The weights of the representation $V(\omega_k)$ are easy to compute, they correspond to the exterior powers $z_{i_1}\wedge \ldots \wedge z_{i_k}$, and are in bijection with $m$-length $0, 1$ vectors with exactly $k$ $1'$s. In particular, the subspace (the "weight space") of $V(\omega_k)$ with a particular weight is always $1$-dimensional. 

The weight lattice contains the vectors $\alpha_{ij} = (0, \ldots, 1, \ldots, -1, \ldots 0), 1 \leq i < j \leq m,$ which are known as the roots of the Lie algebra $\sl_m(\C)$, i.e. these are the characters which appear in the decomposition of $\sl_m(\C)$ as a representation of the group of diagonal matrices in $\SL_m(\C).$ The root lattice $\mathcal{R}_m \subset \mathcal{W}_m$ is the lattice generated by the roots.
It is a simple exercise to show the following. 

\begin{equation}
\Z/ m\Z = \mathcal{W}_m /\mathcal{R}_m\\
\end{equation}

As we will see, this is the principal reason that $\SL_m(\C)$ conformal blocks are related to phylogenetic statistical models for the group $\Z/m\Z.$

\subsection{Graphs and curves}

In this subsection we recall some of the combinatorial notions needed to discuss algebras of conformal blocks and phylogenetic
statistical models.  From now on $\Gamma$ denotes a graph, we say $\Gamma$ is of type $g, n$ if its first Betti number is $g$ and
it has $n$ leaves.  We let $V(\Gamma)$ be the set of non-leaf vertices of $\Gamma.$
For two graphs $\Gamma, \Gamma'$ of the same type, we define a map of graphs $\phi: \Gamma' \to \Gamma$ to be a map on the underlying graphs which preserves leaves, obtained by collapsing a set of non-leaf edges of $\Gamma'.$ 

\begin{figure}[htbp]
\centering
\includegraphics[scale = 0.4]{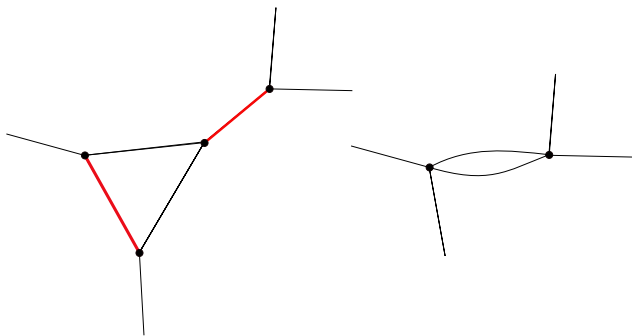}
\caption{The graph on the right is obtained by collapsing the highlighted edges on the left}
\label{fig:collapse}
\end{figure}

For a graph $\Gamma$ we also define a forest $\hat{\Gamma}$, obtained from $\Gamma$ by splitting
each non-leaf edge of $\Gamma.$  The connected components $\Gamma_v$ of $\hat{\Gamma}$ correspond to the
non-leaf vertices $v \in V(\Gamma)$, and each has one distinguished non-leaf vertex.  We let $\pi_{\Gamma}: \hat{\Gamma} \to \Gamma$
be the quotient map which ``glues'' split edges back together.  

Graphs $\Gamma$ act as combinatorial models for points in the moduli of stable curves $\overline{\mathcal{M}}_{g, n}$
of genus $g$ with $n$ marked points.  This stack has a stratification by substacks $\overline{\mathcal{M}}(\Gamma)$, and a curve $C, \vec{p} \in \overline{\mathcal{M}}(\Gamma) \subset \overline{\mathcal{M}}_{g, n}$ is said to be of type $\Gamma.$  Such a curve is generally reducible, with one irreducible component $C_v, \vec{p}_v, \vec{q}_v$ for each non-leaf vertex $v \in V(\Gamma).$ Here $\vec{p}_v$ are the marked points of $C$ which lie on the component $C_v$, and $\vec{q}_v$ are the points on the component $C_v$ which are shared by other components of $C$.  The edges of the labelled tree $\Gamma_v$ are in bijection with the points $\vec{p}_v, \vec{q}_v.$

\begin{figure}[htbp]
\centering
\includegraphics[scale = 0.4]{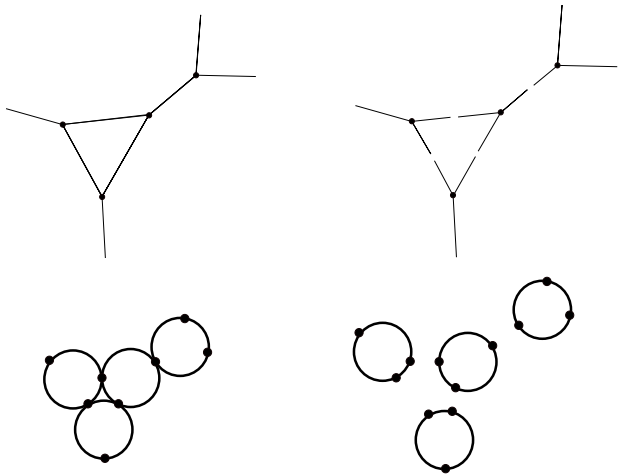}
\caption{The forest $\hat{\Gamma},$ along with the normalization of the associated stable curve}
\label{fig:curvetype}
\end{figure} 

For a reducible curve $C, \vec{p}$ of type $\Gamma$, we can form the normalization $\hat{C}, \vec{p}, \vec{q}$ by splitting
each nodal singularity into a pair of points, creating a disconnected, marked curve. Each pair of points introduced by the normalization
become new marked points in $\hat{C}$.  The combinatorial type of $\hat{C}, \vec{p}, \vec{q}$ is given by the forest $\hat{\Gamma},$
and the connected components $C_i$ of $\hat{C}$ are the irreducible components of $C$, and correspond to the trees $\Gamma_{v_i}.$

The poset of the stratification of $\overline{\mathcal{M}}_{g, n}$ given by the substacks $\overline{\mathcal{M}}(\Gamma)$ is determined
by the graph maps $\pi: \Gamma' \to \Gamma.$  The component $\overline{\mathcal{M}}(\Gamma')$ is in the closure of $\overline{\mathcal{M}}(\Gamma)$
if and only if there is a graph map $\pi: \Gamma' \to \Gamma.$  The lowest strata of $\overline{\mathcal{M}}_{g, n}$ are given by trivalent graphs $\Gamma,$ these are all isolated points in the moduli $\overline{\mathcal{M}}_{g, n}.$

\subsection{Conformal Blocks}

In this subsection we recall some properties of the algebra of conformal blocks, and define the semigroup $S_{\Gamma}(\SL_m(\C)).$ Vector spaces of conformal blocks arise from the Wess-Zumino-Witten (WZW) model of conformal field theory.  We let $\Delta_L$ be the simplex $\{\lambda | \lambda(\alpha_{1,m}) \leq L\}$, where $\alpha_{1, m}$ is the longest root of $\sl_m(\C),$ this polytope is called the level $L$ alcove.  There is a conformal blocks vector space $V_{C, \vec{p}}(\vec{\lambda}, L)$ for each $n$-marked genus $g$ stable curve $C, \vec{p} \in \overline{\mathcal{M}}_{g, n},$ set of $n$ dominant weights $\vec{\lambda} = \{\lambda_1, \ldots, \lambda_n\}$, $p_i \to \lambda_i$, and non-negative integer $L$, known as the level.  Vectors in these spaces are possible partition functions for the WZW model on $C$.  In \cite{TUY}, Tsuchiya, Ueno, and Yamada show that for fixed $\vec{\lambda}, L$, these spaces knit together into a vector bundle over $\overline{\mathcal{M}}_{g, n}.$  This theorem is modified by the second author in \cite{Manon09} with the following theorem.  

\begin{theorem}
There is a flat sheaf $V(\SL_m(\C))$ of algebras on the moduli space $\overline{\mathcal{M}}_{C, \vec{p}}$ of stable curves, such that the fiber over a point $C, \vec{p}$ is isomorphic to the direct sum $V_{C, \vec{p}}(\SL_m(\C)) = \bigoplus_{\vec{\lambda}, L} V_{C, \vec{p}}(\vec{\lambda}, L)$ of $\SL_m(\C)$ conformal blocks on $C, \vec{p}$. 
\end{theorem}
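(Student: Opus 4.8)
The plan is to construct the sheaf locally over the strata $\overline{\mathcal{M}}(\Gamma)$ using the combinatorial models $\Gamma$, and then glue these descriptions along the graph maps $\pi: \Gamma' \to \Gamma$. The essential input is the factorization theorem of Tsuchiya–Ueno–Yamada, which (i) exhibits, for fixed $\vec\lambda, L$, a vector bundle structure on $\overline{\mathcal{M}}_{g,n}$ whose fibers are the $V_{C,\vec p}(\vec\lambda, L)$, and (ii) describes the fiber over a nodal curve $C$ of type $\Gamma$ as a direct sum, over all ways of labelling the edges of $\Gamma$ by weights in the alcove $\Delta_L$, of tensor products of conformal blocks on the normalizations $\hat C = \bigsqcup_v C_v$, the node-weights being matched by the gluing. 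First I would take the naive fiberwise direct sum $\bigoplus_{\vec\lambda, L} V_{C,\vec p}(\vec\lambda, L)$ and observe, via (i), that summing a (locally constant in $\vec\lambda, L$) family of vector bundles gives a coherent — indeed locally free, though infinite rank — sheaf; flatness over $\overline{\mathcal{M}}_{g,n}$ is then inherited summand by summand.

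The substance of the theorem is the \emph{algebra} structure. The WZW model carries a product coming from the inclusions $V_{C,\vec p}(\vec\lambda, L) \otimes V_{C,\vec p}(\vec\mu, L') \to V_{C,\vec p}(\vec\lambda+\vec\mu, L+L')$, which at the level of representation theory reflects the Cartan multiplication on the coordinate ring of $\SL_m(\C)/B$ (or, in the conformal-block language, the embedding of fusion product data at higher level). I would verify that these maps are compatible with the vector-bundle structure in each graded piece — this is a statement that can be checked on a dense open locus of smooth curves, where conformal blocks are just spaces of covariants $\big(V(\lambda_1)\otimes\cdots\otimes V(\lambda_n)\big)^{\mathfrak g}$ for $g=0$, or their natural higher-genus analogues, and the product is the obvious one — and then extended over all of $\overline{\mathcal{M}}_{g,n}$ by flatness: two sheaf maps agreeing on a schematically dense open set agree everywhere, since the graded components are torsion-free.

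The main obstacle, and the point requiring genuine care, is checking that the multiplication is compatible with \emph{degeneration to nodal curves} — i.e., that under the factorization isomorphism of (ii) the product on $V_{C,\vec p}(\SL_m(\C))$ for $C$ of type $\Gamma$ decomposes as the ``convolution'' product on $\bigoplus_{\text{edge-labellings}} \bigotimes_{v} V_{C_v}(\text{---})$ dictated by summing weights edge-by-edge. Concretely this is the assertion that the factorization maps of TUY are themselves maps of algebras once one assembles all levels; I would reduce it to the case of a single node (one irreducible nodal curve, or a curve with two smooth components meeting at a point) by an induction on $|E(\Gamma)| = |V(\Gamma)| + g - 1$, peeling off one non-leaf edge at a time using the stratification poset, and in the single-node case appeal directly to the gluing/propagation-of-vacua axioms together with the bilinear pairing between $V(\lambda)$ and $V(\lambda^*)$ that implements the node. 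Once the algebra structure is shown to be flat-local and compatible with these gluings, the sheaf $V(\SL_m(\C))$ is defined, its flatness is the flatness already established on graded pieces, and the identification of the fiber over $C, \vec p$ with $\bigoplus_{\vec\lambda, L} V_{C,\vec p}(\vec\lambda, L)$ is by construction.
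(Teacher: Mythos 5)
The paper does not actually prove this statement: it is imported verbatim from the second author's earlier work \cite{Manon09} (``This theorem is modified by the second author in \cite{Manon09} with the following theorem''), so there is no in-paper argument to compare yours against. Judged on its own terms, your outline gets the easy half right --- local freeness of each $V(\vec\lambda,L)$ from Tsuchiya--Ueno--Yamada, hence flatness of the graded direct sum as a sheaf of modules, plus a family of multiplication maps $V_{C,\vec p}(\vec\lambda,L)\otimes V_{C,\vec p}(\vec\mu,L')\to V_{C,\vec p}(\vec\lambda+\vec\mu,L+L')$ --- but the step you single out as the main obstacle is, as you have formulated it, not just hard but false, and the paper itself signals this.

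You assert that under factorization the product on $V_{C,\vec p}(\SL_m(\C))$ for a nodal curve of type $\Gamma$ becomes the edge-labelling ``convolution'' product on $\bigoplus\bigotimes_v V_{C_v,\vec p_v,\vec q_v}(\cdots)$, i.e.\ that the factorization maps are algebra maps. That convolution algebra is precisely $V_\Gamma(\SL_m(\C))$ as defined in the paper, and the very next theorem quoted from \cite{Manon09} says only that $V_{C,\vec p}(\SL_m(\C))$ admits a flat \emph{degeneration} to $V_\Gamma(\SL_m(\C))$, not that the two are isomorphic as algebras; if your induction on edges went through, that degeneration theorem would be vacuous. The factorization isomorphism is canonical only up to a scalar on each $\vec\eta$-summand and is an isomorphism of vector spaces; multiplicatively it is compatible only with a filtration whose associated graded is the convolution algebra, which is exactly why one gets a degeneration rather than an isomorphism. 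So the algebra structure on the sheaf cannot be obtained by declaring factorization multiplicative; in \cite{Manon09} it is built directly (from Cartan-component projections of integrable highest-weight modules, uniformly in families), and the comparison with $V_\Gamma$ is a separate Rees-algebra/filtration argument. A related soft spot: defining the product over the smooth locus and extending by torsion-freeness does produce a sheaf of algebras, but identifying its boundary fibers with the conformal-block algebras of the nodal curves themselves (rather than with some unidentified limit) is nontrivial and is part of what the cited theorem is asserting.
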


\noindent
This theorem, along with work of Kumar, Narasimhan, Ramanathan \cite{KNR}, Pauly \cite{P}, Beauville, Laszlo, \cite{BL}, and Faltings \cite{F}, establishes that over a smooth curve $C, \vec{p},$ the algebra $V_{C, \vec{p}}(\SL_m(\C))$ is isomorphic to the total coordinate ring of the moduli of rank $m$ vector bundles on $C$ with parabolic structure at the marked points $\vec{p}.$

Conformal blocks come with a number of structural features. For a graph $\Gamma$ and a stable curve $C, \vec{p}$ of type $\Gamma$ we can take the smooth normalization, $\tilde{C}, \vec{p}, \vec{q}_1, \vec{q}_2 $ by pulling apart the double point singularities, resulting in pairs of marked points $\vec{q}_1, \vec{q}_2$. The following theorem expresses how conformal blocks behave with respect to this operation, it is due to Tsuchiya, Ueno, and Yamada, \cite{TUY} and also Faltings \cite{F}.  

\begin{theorem}\label{factor}
For $C, \vec{p}$ a stable curve with normalization $\tilde{C}, \vec{p},\vec{q}_1, \vec{q}_2$, there is an isomorphism of vector spaces, canonical up to choice of scalar for each $\vec{\eta} = (\eta_1, \ldots, \eta_m)$ summand, where $|\vec{q}_i| = m.$ 

\begin{equation}
V_{C, \vec{p}}(\vec{\lambda}, L) = \bigoplus_{\eta_j \in \Delta_L} V_{\tilde{C},\vec{p}, \vec{q}_1, \vec{q}_2 }(\vec{\lambda}, \vec{\eta}, \vec{\eta}^*, L)\\
\end{equation}

Here the sum is over all assignments of weights $\eta_j \in \Delta_L.$ 
\end{theorem}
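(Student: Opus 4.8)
The plan is to reduce the statement to the classical ``factorization'' or ``gluing'' isomorphism for conformal blocks over a smooth curve with extra marked points, which is exactly what Theorem~\ref{factor} records in its known form. Concretely, a stable curve $C, \vec{p}$ of type $\Gamma$ acquires a normalization $\tilde{C}, \vec{p}, \vec{q}_1, \vec{q}_2$ where each pair $q_1^{(e)}, q_2^{(e)}$ of new marked points comes from resolving a single node associated to a non-leaf edge $e$. The WZW propagation/factorization axiom (Tsuchiya--Ueno--Yamada \cite{TUY}, Faltings \cite{F}) states that gluing two marked points $q_1, q_2$ carrying weights $\eta$ and $\eta^*$ contributes a one-dimensional space when the weights are dual and nothing otherwise; summing over all level-$L$ weights that can be placed on the glued points recovers the conformal block on the singular curve. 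So first I would invoke the single-node version of factorization and then iterate it over all $|E(\Gamma)| - n$ internal edges (equivalently, over all nodes of $C$), keeping track that the weights assigned to the two branches at each node are dual.

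Next I would make precise the bookkeeping: the normalization $\tilde{C}$ is a disjoint union of the components $C_v$, $v \in V(\Gamma)$, so $V_{\tilde{C},\dots}$ is a tensor product $\bigotimes_v V_{C_v,\dots}$ of conformal blocks on the individual components, with the external weights $\vec\lambda$ distributed to the appropriate leaves and the internal weights $\vec\eta$ assigned to the half-edges $\vec q_1, \vec q_2$. Applying the single-node factorization once for each internal edge peels off the contribution of that node; after doing this for every node, the residual contribution on the smooth disjoint curve $\tilde C$ is exactly $V_{\tilde{C},\vec{p},\vec{q}_1,\vec{q}_2}(\vec\lambda,\vec\eta,\vec\eta^*,L)$. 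The constraint ``$\eta_j \in \Delta_L$'' in the statement is precisely the condition that the gluing weight lie in the level-$L$ alcove, which is forced by the one-node axiom; and the duality $\vec\eta \leftrightarrow \vec\eta^*$ at each node is what makes the glued space one-dimensional rather than $\dim V(\eta)$-dimensional. The canonical-up-to-scalar clause is inherited directly from the one-node case, where the gluing isomorphism depends on a choice of generator of the one-dimensional space $\mathrm{Hom}(V(\eta)\otimes V(\eta^*), \C)$.

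The main obstacle I anticipate is not the existence of the isomorphism --- that is genuinely standard --- but rather verifying that the iteration is well-defined and independent of the order in which nodes are resolved, i.e. that the decomposition really is indexed cleanly by a single vector $\vec\eta$ of internal weights and that no compatibility conditions among different nodes are lost. This amounts to checking that factorization at distinct nodes ``commutes'', which follows because resolving one node does not affect the local geometry at another; still, one must state this carefully, especially for graphs with loops (first Betti number $g > 0$), where an internal edge can be a loop at a single vertex and both of its half-edges land on the same component $C_v$. A secondary, more cosmetic point is matching conventions: the excerpt writes the sum with $|\vec q_i| = m$ and indexes $\vec\eta = (\eta_1,\dots,\eta_m)$, so I would make sure the indexing of half-edges/nodes in the statement agrees with the edge set of $\Gamma$, and note that the genus enters only through how many independent internal weights there are. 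With those bookkeeping points settled, the proof is essentially ``apply \cite{TUY} and \cite{F} one node at a time and collect terms.''
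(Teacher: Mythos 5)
The paper offers no proof of this theorem at all: it is quoted as a known result of Tsuchiya--Ueno--Yamada \cite{TUY} and Faltings \cite{F}, and your plan --- iterate the single-node gluing isomorphism over the nodes of $C$, tracking the dual weights $\vec{\eta}, \vec{\eta}^*$ on the paired half-edges and the level-$L$ alcove constraint --- is exactly the standard derivation underlying those references. So your proposal is consistent with the paper's treatment and correctly identifies both the key input (the one-node factorization axiom) and the only real bookkeeping issue (independence of the order in which nodes are resolved, including loop edges).
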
 

This theorem allows us to express properties about conformal blocks over a general curve $C, \vec{p}$
in terms of simpler curves.  In particular, for every graph $\Gamma$ with first Betti number equal to $g$ and $n$ leaves there is a stable curve $C, \vec{p}$ with normalization $\tilde{C}, \vec{p},\vec{q}_1, \vec{q}_2$ equal to a disjoint union of  marked copies of the rational curve $\mathbb{P}^1$.  Spaces of conformal blocks over non-connected curves are tensor products of the spaces of conformal blocks over their connected components, so this implies that we can effectively study the conformal blocks for any curve by understanding these spaces over marked projective spaces.

This approach extends to the algebraic structure introduced in \cite{Manon09}. We fix a marked curve $C, \vec{p}$ of type $\Gamma$ with normalization $\hat{C}, \vec{p}, \vec{q},$ and we consider the following tensor product. 

\begin{equation}
\bigotimes_{v \in V(\Gamma)} V_{C_{v}, \vec{p}_v, \vec{q}_v}(\SL_m(\C))\\
\end{equation}

\noindent
This algebra is a multigraded sum of tensor product spaces of conformal blocks. 

\begin{equation}
\bigotimes_{v \in V(\Gamma)} V_{C_{v}, \vec{p}_v, \vec{q}_v}(\vec{\lambda}_v, \vec{\eta}_v, L_v)\\
\end{equation}

Here $\vec{\lambda}_v$ are the dominant weights assigned to the marked points of $C_v$ which correspond
to marked points on the original curve $C,$ and $\vec{\eta}_v$ are the dominant weights assigned to marked points which are introduced by the normalization procedure.  Recall that there is a way to pair these weights $\eta_{v_i} \leftrightarrow \eta_{v_j}.$

\begin{definition}
The algebra $V_{\Gamma}(\SL_m(\C))$ is the sub-algebra of 
\begin{displaymath}
\bigotimes_{v \in V(\Gamma)} V_{C_{v}, \vec{p}_v, \vec{q}_v}(\SL_m(\C)) 
\end{displaymath}
defined by the components $V_{C_{v}, \vec{p}_v, \vec{q}_v}(\vec{\lambda}_v, \vec{\eta}_v, L_v)$ with $L_{v_i} = L_{v_j}$ for all $v \in V(\Gamma)$, 
and $\eta_{v_i} = \eta_{v_j}^*$ for all paired dominant weights. 
\end{definition}

This algebra is related to the algebra of conformal blocks $V_{C, \vec{p}}(\SL_m(\C))$ for a curve of genus $g$ with $n$ markings by the following theorem from \cite{Manon09}.

\begin{theorem}
There is a flat degeneration of commutative algebras $V_{C, \vec{p}}(\SL_m(\C))$ to $V_{\Gamma}(\SL_m(\C)).$ 
\end{theorem}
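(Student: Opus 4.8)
The plan is to build the degeneration by exhibiting a filtration on $V_{C, \vec{p}}(\SL_m(\C))$ whose associated graded algebra is $V_{\Gamma}(\SL_m(\C))$. Since $\Gamma$ has type $g, n$, we may pick a maximal degeneration inside $\overline{\mathcal{M}}(\Gamma)$, i.e. a stable curve $C, \vec{p}$ of type $\Gamma$, and repeatedly apply the factorization isomorphism of Theorem~\ref{factor} at the nodes corresponding to the non-leaf edges of $\Gamma$. This yields a vector space identification
\begin{displaymath}
V_{C, \vec{p}}(\vec{\lambda}, L) \;\cong\; \bigoplus_{\vec{\eta}} \;\bigotimes_{v \in V(\Gamma)} V_{C_v, \vec{p}_v, \vec{q}_v}(\vec{\lambda}_v, \vec{\eta}_v, L),
\end{displaymath}
where $\vec{\eta}$ runs over assignments of a dominant weight $\eta_e \in \Delta_L$ to each non-leaf edge $e$ of $\Gamma$, and at each node one side carries $\eta_e$ and the other $\eta_e^*$. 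Summing over all $\vec{\lambda}, L$ gives a vector space isomorphism between $V_{C, \vec{p}}(\SL_m(\C))$ and the subalgebra $V_{\Gamma}(\SL_m(\C))$ of $\bigotimes_{v} V_{C_v, \vec{p}_v, \vec{q}_v}(\SL_m(\C))$ cut out by matching levels and dual edge-weights.

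The first step is to promote this vector space isomorphism to an algebra degeneration. The multigrading on $V_{\Gamma}(\SL_m(\C))$ by the edge-weights $\vec{\eta} \in \prod_{e} \Delta_L$ (together with $L$) defines a filtration: order the monoid of edge-weight data by a suitable term order and let $F_{\leq w}$ be the span of all graded pieces with index $\preceq w$. Pulling this back along the factorization identification gives a filtration on $V_{C, \vec{p}}(\SL_m(\C))$. I would then check that this is an algebra filtration — that the product of an element in $F_{\leq w}$ and one in $F_{\leq w'}$ lies in $F_{\leq w + w'}$ — and that the associated graded is exactly the multigraded algebra $V_{\Gamma}(\SL_m(\C))$. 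The Rees algebra construction then produces a flat family over $\mathbb{A}^1$ with generic fiber $V_{C, \vec{p}}(\SL_m(\C))$ and special fiber $V_{\Gamma}(\SL_m(\C))$; flatness is automatic since the Rees algebra is torsion-free over $\C[t]$.

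The key geometric input making the filtration multiplicative is the behavior of the WZW product with respect to gluing: the "sewing" or propagation of vacua construction expresses multiplication of conformal blocks on $C$ in terms of the bilinear pairings $V_{C_v}(\ldots, \eta_e, \ldots) \times V_{C_v}(\ldots, \eta_e^*, \ldots) \to \C$ at each node, and the leading term of such a product in the edge-weight filtration is precisely the component-wise product in $\bigotimes_v V_{C_v}(\SL_m(\C))$. This is the content of the original degeneration theorem, so I would cite \cite{Manon09} and \cite{TUY} for the sewing axioms and reconstruct only the combinatorial bookkeeping of how the filtration index adds.

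The main obstacle I expect is verifying that the filtration is genuinely multiplicative, i.e. that no unexpected cancellation lets a product land in a filtration piece strictly larger than $w + w'$, and dually that every graded piece of $V_{\Gamma}(\SL_m(\C))$ is actually hit (so that $\gr$ is all of $V_{\Gamma}$ and not a proper subalgebra). This requires a careful analysis of the structure constants of the conformal block algebra under iterated factorization, in particular that the pairing at each node is non-degenerate on each weight summand (which follows from the factorization isomorphism being an isomorphism, Theorem~\ref{factor}) and that the sewing map respects the edge-weight grading additively. Once this is in hand, flatness and the identification of the special fiber are formal, and the theorem follows.
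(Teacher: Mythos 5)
First, a point of reference: the paper does not prove this statement at all --- it is imported verbatim from \cite{Manon09} (``the following theorem from \cite{Manon09}''), so there is no in-paper proof to compare against. Your outline does reconstruct the strategy actually used there: realize $V_{C,\vec{p}}(\SL_m(\C))$ over a nodal curve of type $\Gamma$, use factorization (Theorem~\ref{factor}) to identify its underlying vector space with that of $V_{\Gamma}(\SL_m(\C))$, filter by the edge-weight data, and pass to the Rees algebra. That is the right skeleton, and your identification of the two things that must be checked (multiplicativity of the filtration, and that the associated graded is all of $V_{\Gamma}$ rather than a proper subalgebra) is exactly where the content lies.

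The gap is that you do not actually supply that content, and the way you propose to fill it is circular: you write that the multiplicativity of the filtration ``is the content of the original degeneration theorem, so I would cite \cite{Manon09}'' --- but the original degeneration theorem is precisely the statement being proved. Two concrete issues remain open in your sketch. (i) You posit ``a suitable term order'' on the monoid of edge-weight data and assert $F_{\leq w}\cdot F_{\leq w'}\subset F_{\leq w+w'}$, but the existence of an order compatible with the structure constants of the sewn product is not formal; in \cite{Manon09} this is extracted from a torus action on (the algebra over) a versal deformation of the nodes, i.e.\ from the $t$-adic structure of the smoothing parameters, not from an abstract choice of term order. Without that geometric input one cannot rule out that a product has a nonzero component in an incomparable $\vec{\eta}$-summand. (ii) The theorem is stated for a curve $C,\vec{p}$ of genus $g$ with $n$ markings in general, whereas you choose $C$ nodal of type $\Gamma$ from the outset; for a smooth or less degenerate $C$ one needs first to degenerate along $\overline{\mathcal{M}}_{g,n}$ into the stratum $\overline{\mathcal{M}}(\Gamma)$, which is supplied by the flat-sheaf theorem stated just before this one in the paper and should be invoked explicitly as the first stage of the composite degeneration.
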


\begin{definition}
We define the semigroup $S_{\Gamma}(\SL_m(\C)) \subset \Delta^{|E(\hat{\Gamma})|}\times \Z_{\geq 0}$ to be the set of pairs $(\omega, L)$, where
 $\omega: E(\hat{\Gamma}) \to \Delta^{|E(\hat{\Gamma})|}$ and $L \in \Z_{\geq 0}$, such that the component of $V_{\Gamma}(\SL_m(\C))$
defined by $(\omega, L)$ is non-zero. 
\end{definition}

There is a map of algebras, $V_{\Gamma}(\SL_m(\C)) \to \C[S_{\Gamma}(\SL_m(\C))]$ given by sending conformal blocks to their weight and level information. Note that if $\pi: \Gamma' \to \Gamma$ is a map of graphs, then there is a corresponding map of semigroups $S_{\Gamma'}(\SL_m(\C)) \to S_{\Gamma}(\SL_m(\C))$ defined by forgetting the weights on the edges collapsed by $\pi.$

\section{Conformal blocks and invariants in tensor products}\label{section:conformal_blocks_II}

In this section we discuss the construction of the space of conformal blocks over a genus $0$, triple marked curve.  This involves spaces of invariant vectors in tensor products of $\SL_m(\C)$ representations in a non-trivial way. We use this construction to revisit a well-known combinatorial classification of level $1$ $\SL_m(\C)$ conformal blocks.  The combinatorics of these special conformal blocks allow us to make the connection with phylogenetic statistical models in Section~\ref{section:conformal_block_algebras_and_group_based_models}.

\subsection{Spaces of conformal blocks in spaces of invariants}

In order to build the space $V_{0, 3}(\lambda, \eta, \mu, L)$ of conformal blocks
we study the action of $\SL_m(\C)$ on the tensor product $V(\lambda) \otimes V(\eta) \otimes V(\mu).$
We let $[V(\lambda) \otimes V(\eta) \otimes V(\mu)]^{\SL_m(\C)}$ be the space of vectors fixed by 
this action. The space of conformal blocks can be constructed as a subspace
of this space of invariants. 

\begin{equation}
V_{0, 3}(\lambda, \eta, \mu, L) \subset [V(\lambda) \otimes V(\eta) \otimes V(\mu)]^{\SL_m(\C)}\\
\end{equation}

We introduce the subgroup $\SL_2(1, m) \subset \SL_m(\C)$, this is the copy of $\SL_2(\C)$ inside of $\SL_m(\C)$ which acts on the basis vectors with indices $1, m$ in the standard representation of $\SL_m(\C).$  The group $\SL_2(1, m)$ is composed of the matrices of the following form. 

$$
\left( \begin{array}{cccc}
 a & 0 & \cdots & b \\
 0 & 1 & \cdots & 0\\
 \vdots & \vdots & \ddots  & \vdots\\
 c & 0 &\cdots & d \\ 
\end{array} \right)
$$

The subgroup $\SL_2(1, m)$ corresponds to the longest root of $\SL_m(\C)$ in its standard ordering of roots. 
By way of the inclusion $\SL_2(1, m) \subset \SL_m(\C)$, every $\SL_m(\C)$ representation can be restricted
to a representation of $\SL_2(1, m)$, and decomposed along the irreducible representations of $\SL_2(\C).$

\begin{equation}
V(\lambda) = \bigoplus_i W_{\lambda, i}\otimes V(i)\\
\end{equation}
$$V(\eta) = \bigoplus_j W_{\eta, j}\otimes V(j)$$
$$V(\mu) = \bigoplus_k W_{\mu, k}\otimes V(k)$$

We let $W_L \subset V(\lambda) \otimes V(\eta) \otimes V(\mu)$ be the following subspace,

\begin{equation}
\bigoplus_{i + j + k \leq 2L}  [W_{\lambda, i}\otimes V(i)] \otimes [W_{\eta, j}\otimes V(j)] \otimes [W_{\mu, k}\otimes V(k)].\\
\end{equation}

\noindent
For the following proposition see \cite[Corollary 6.2]{U}. 

\begin{proposition}\label{ueno}

The space of conformal blocks $V_{0, 3}(\lambda, \eta, \mu, L)$ can be identified with the following subspace of $[V(\lambda)\otimes V(\eta) \otimes V(\mu)]^{\SL_m(\C)},$ 

\begin{equation}
V_{0, 3}(\lambda, \eta, \mu, L) = W_L \cap [V(\lambda) \otimes V(\eta) \otimes V(\mu)]^{\SL_m(\C)}.\\
\end{equation}
\end{proposition}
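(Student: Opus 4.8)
The plan is to cite \cite[Corollary 6.2]{U} for the essential content, but to organize the argument so that the geometric meaning of the subspace $W_L$ is transparent. First I would recall the propagation-of-vacua and factorization package for genus $0$ conformal blocks: the space $V_{0,3}(\lambda,\eta,\mu,L)$ is built from the affine Lie algebra $\widehat{\sl_m(\C)}$ at level $L$, and the gauge-symmetry construction identifies it with the quotient of $V(\lambda)\otimes V(\eta)\otimes V(\mu)$ by the action of the Lie algebra of $\SL_m(\C)$-valued functions on $\mathbb{P}^1\setminus\{p_1,p_2,p_3\}$. Dualizing, $V_{0,3}(\lambda,\eta,\mu,L)$ sits inside the space of $\SL_m(\C)$-invariants $[V(\lambda)\otimes V(\eta)\otimes V(\mu)]^{\SL_m(\C)}$, since the constant functions already impose $\SL_m(\C)$-invariance; this gives the containment displayed just before the proposition for free.

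The heart of the matter is then to show that the \emph{extra} relations coming from non-constant gauge transformations cut out exactly $W_L$ inside the invariants. The standard reduction is that these relations are generated by the single ``worst'' current associated to the longest root $\alpha_{1,m}$, because the affine Weyl alcove condition $\lambda(\alpha_{1,m})\le L$ is the only inequality beyond dominance that distinguishes level-$L$ admissible weights from all dominant weights. This is why the subgroup $\SL_2(1,m)\subset\SL_m(\C)$ enters: restricting each $V(\lambda),V(\eta),V(\mu)$ to $\SL_2(1,m)$ and decomposing as in the displayed formulas $V(\lambda)=\bigoplus_i W_{\lambda,i}\otimes V(i)$, etc., the level-$L$ vacuum condition for $\widehat{\sl_m}$ becomes, on the nose, the level-$L$ vacuum condition for $\widehat{\sl_2}$ applied to the longest-root $\SL_2$. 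So I would invoke the $\SL_2$ case — where it is classical that $V_{0,3}^{\SL_2}(i,j,k,L)$ is one-dimensional precisely when $i+j+k\le 2L$, $i+j+k$ even, and the triangle inequalities hold, and zero otherwise — and transfer it through this decomposition. Concretely: an invariant vector $w\in[V(\lambda)\otimes V(\eta)\otimes V(\mu)]^{\SL_m(\C)}$ is killed by all the gauge currents iff its components in the pieces with $i+j+k>2L$ vanish, which is exactly the condition $w\in W_L$.

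The main obstacle I expect is making the second step rigorous rather than heuristic, i.e.\ proving that the full system of gauge relations on genus-$0$ three-pointed blocks is equivalent to the single longest-root $\SL_2$ condition. The honest route is to quote the known formula for $V_{0,3}(\lambda,\eta,\mu,L)$ (for instance the factorization formula combined with the fusion rules, or Ueno's Corollary~6.2 directly), and then check that the resulting subspace of the invariants coincides with $W_L\cap[V(\lambda)\otimes V(\eta)\otimes V(\mu)]^{\SL_m(\C)}$ by comparing weight-space dimensions on both sides. For this comparison one uses that $W_{\lambda,i}$ is nonzero iff $i\le\lambda(\alpha_{1,m})$ and $i\equiv\lambda(\alpha_{1,m})\bmod 2$ — a statement about how the longest-root $\sl_2$ acts on $V(\lambda)$ — together with the $\SL_2$ three-point count above. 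I would therefore structure the proof as: (i) cite the realization of $V_{0,3}$ as a subspace of the invariants via gauge symmetry; (ii) reduce the defining relations to the longest-root current; (iii) use the $\SL_2$-decomposition and the classical $\SL_2$ three-point fusion rule to identify the resulting subspace with $W_L\cap[V(\lambda)\otimes V(\eta)\otimes V(\mu)]^{\SL_m(\C)}$; and (iv) remark that all isomorphisms here are canonical up to scalar on each weight summand, matching the phrasing used in Theorem~\ref{factor}.
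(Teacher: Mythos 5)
The paper offers no proof of Proposition~\ref{ueno}: it is stated with the single line ``For the following proposition see \cite[Corollary 6.2]{U}'', so your decision to ultimately rest the argument on that citation is exactly what the authors do, and your surrounding sketch (gauge-symmetry realization of $V_{0,3}$ inside the invariants, reduction of the extra relations to the longest-root current, the $\SL_2(1,m)$-isotypic decomposition) is the standard argument \emph{behind} Ueno's corollary rather than an alternative to it. In that sense your step (i) and the overall architecture match what is being cited.

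Two caveats. First, the reduction in your step (ii) --- that all non-constant gauge relations collapse to the single condition attached to the longest root $\alpha_{1,m}$ --- is not a ``standard reduction'' you can wave at; it is essentially the entire content of the cited corollary (it rests on the integrability relation for the level-$L$ affine highest weight modules, $x_{\alpha_{1,m}}^{\,L-\lambda(H_{\alpha_{1,m}})+1}$ annihilating the highest weight vector). You correctly flag this as the main obstacle, but the proposal does not close it, so as written the argument is circular unless you simply quote \cite[Corollary~6.2]{U} at that point --- which is what the paper does. Second, the auxiliary claim you plan to use in the dimension comparison is false: it is not true that $W_{\lambda,i}\neq 0$ only if $i\equiv\lambda(\alpha_{1,m})\bmod 2$. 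For example $V(\omega_1)=\C^m$ restricted to $\SL_2(1,m)$ decomposes as $V(1)\oplus V(0)^{\oplus(m-2)}$, so $W_{\omega_1,0}\neq 0$ while $\omega_1(\alpha_{1,m})=1$; the weights of $V(\lambda)$ differ from $\lambda$ by elements of the root lattice, and roots can pair oddly with $H_{\alpha_{1,m}}$, so no parity constraint survives for a root $\SL_2$ (as opposed to the principal one). Any dimension count built on that claim will come out wrong; the identification of the cut-out subspace with $W_L\cap[V(\lambda)\otimes V(\eta)\otimes V(\mu)]^{\SL_m(\C)}$ should instead be read off directly from the statement of the cited corollary.
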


\subsection{Level $1,$ genus $0$, $3$-marked conformal blocks}

A consequence of Proposition \ref{ueno} is that the dimension of $V_{0, 3}(\lambda, \eta, \mu, L)$ is bounded above by the dimension of the space $[V(\lambda) \otimes V(\eta) \otimes V(\mu)]^{\SL_m(\C)}.$  This is particularly useful in the case we consider here, when the level $L$ is restricted to be $1.$  The level $1$ alcove $\Delta_1$ is precisely the $0$ weight and the fundamental weights $\omega_i,$ so all conformal blocks we consider will now have only these weights in their weight datum.  We make use of the following fact from the representation theory of $\SL_m(\C).$

\begin{lemma}
The tensor product invariant space $[V(\lambda) \otimes V(\omega_i) \otimes V(\mu)]^{\SL_m(\C)}$ is
multiplicity free, where $\omega_i$ is the $i$-th fundamental weight of $\SL_m(\C)$. 
\end{lemma}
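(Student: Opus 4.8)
The plan is to reduce the statement to the classical fact that tensoring with an exterior power $V(\omega_i)=\bigwedge^i(\C^m)$ changes a highest weight in a controlled, multiplicity-free way. Concretely, I would first recall that for any dominant weight $\lambda$ of $\sl_m(\C)$ one has
\begin{equation}
V(\lambda)\otimes V(\omega_i) = \bigoplus_{\nu} V(\nu),
\end{equation}
where the sum runs over the dominant weights $\nu$ obtained from $\lambda$ by adding $i$ of the standard weights $e_{j_1},\dots,e_{j_i}$ of $\C^m$ with distinct indices $j_1<\dots<j_i$ (equivalently, by adding a vertical strip of size $i$ to the Young diagram of $\lambda$), and each such $\nu$ occurs exactly once. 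This is the Pieri rule for $\GL_m$ (see Fulton--Harris, \cite{FH}), and the key point for us is the multiplicity-one statement.

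Next I would combine this with the standard adjunction
\begin{equation}
\dim [V(\lambda)\otimes V(\omega_i)\otimes V(\mu)]^{\SL_m(\C)} = \dim \mathrm{Hom}_{\SL_m(\C)}\bigl(V(\mu)^*,\, V(\lambda)\otimes V(\omega_i)\bigr),
\end{equation}
so the dimension in question equals the multiplicity of the irreducible $V(\mu^*)$ inside $V(\lambda)\otimes V(\omega_i)$. By the Pieri rule this multiplicity is either $0$ or $1$, depending on whether $\mu^*$ arises from $\lambda$ by adding a vertical strip of size $i$; in either case it is at most $1$, which is exactly the assertion that the invariant space is multiplicity free.

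The only real care needed is bookkeeping: the paper works with weights written as decreasing length-$m$ lists ending in $0$ and with $\SL_m$ rather than $\GL_m$, so I would note that passing between $\GL_m$-partitions and $\SL_m$-dominant weights only involves subtracting a multiple of $(1,\dots,1)$, which does not affect tensor product multiplicities, and that $V(\mu)^*\cong V(\mu^*)$ with $\mu^*$ the dominant weight dual to $\mu$ (obtained by the order-reversing relabelling of coordinates). Once these identifications are made, the statement is immediate from Pieri. I do not expect a genuine obstacle here; if one prefers to avoid invoking Pieri as a black box, an equally short alternative is to observe that $V(\omega_i)$ is a minuscule representation, so every weight space of $V(\lambda)\otimes V(\omega_i)$ has dimension at most $\dim V(\lambda)_{\nu}$ summed over the $\binom{m}{i}$ translates, and a weight-multiplicity count together with the fact that the extreme weights of $V(\omega_i)$ form a single Weyl orbit forces all tensor multiplicities to be $0$ or $1$. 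Either route gives the lemma with essentially no computation.
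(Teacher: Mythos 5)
Your argument is correct, but it takes a genuinely different route from the paper's. You reduce via the adjunction $[V(\lambda)\otimes V(\omega_i)\otimes V(\mu)]^{\SL_m(\C)}\cong \mathrm{Hom}_{\SL_m(\C)}\bigl(V(\mu)^*,\, V(\lambda)\otimes V(\omega_i)\bigr)$ and then invoke the (dual) Pieri rule, which says that $V(\lambda)\otimes\bigwedge^i(\C^m)$ decomposes without multiplicity. The paper instead cites a theorem of Zhelobenko, which identifies the invariant space $[V(\lambda)\otimes V(\nu)\otimes V(\mu)]^{\SL_m(\C)}$ for an arbitrary middle factor $V(\nu)$ with a subspace of a single weight space of $V(\nu)$; the lemma then follows because every weight space of $V(\omega_i)=\bigwedge^i(\C^m)$ is one-dimensional. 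The two inputs are of course cousins---both ultimately express that $V(\omega_i)$ is minuscule, and your closing remark about bounding tensor multiplicities by weight multiplicities of a minuscule representation is essentially the paper's argument in different clothing. What Pieri buys you is a completely standard textbook citation and, as a bonus, an explicit description of exactly which $\mu$ give a nonzero invariant space (those with $\mu^*$ obtained from $\lambda$ by adding a vertical strip of size $i$), which the paper does not need. What Zhelobenko buys the paper is uniformity: the same statement bounds $\dim[V(\lambda)\otimes V(\nu)\otimes V(\mu)]^{\SL_m(\C)}$ by a weight multiplicity of $V(\nu)$ for any $\nu$, which is the form in which such estimates recur in the conformal-blocks literature the paper draws on. Your bookkeeping about passing between $\GL_m$ partitions and $\SL_m$ dominant weights, and about $V(\mu)^*\cong V(\mu^*)$, is handled correctly, so there is no gap.
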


\begin{proof}
By a general theorem due to Zhelobenko \cite{Zh} (see also \cite{Manon10}), the space  $[V(\lambda) \otimes V(\omega_i) \otimes V(\mu)]^{\SL_m(\C)}$ can be identified with a subspace of a weight space of $V(\omega_i) = \bigwedge^i(\C^m).$  As we remarked above, all weight spaces of these representations are multiplicity free. 
\end{proof}

\begin{proposition}
The space $V(\omega_i)\otimes V(\omega_j) \otimes V(\omega_k)$ has an invariant $\SL_m(\C)$ vector if and only if $i + j + k = 0$ modulo $m$.
\end{proposition}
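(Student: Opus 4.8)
The plan is to determine exactly when $[V(\omega_i)\otimes V(\omega_j)\otimes V(\omega_k)]^{\SL_m(\C)}\neq 0$ by tracking weights modulo the root lattice. Recall from the preliminaries that $\mathcal W_m/\mathcal R_m \cong \Z/m\Z$, and that under this identification the fundamental weight $\omega_\ell$ maps to the class $\ell \bmod m$ (this is immediate from $\omega_\ell = (1,\dots,1,0,\dots,0)$ with $\ell$ ones, together with the fact that the roots $\alpha_{ij}$ span $\mathcal R_m$ and $\omega_\ell$ differs from $\ell\cdot\omega_1$ by a sum of roots). More generally, the class of any weight $\lambda=(\lambda_1,\dots,\lambda_{m-1},0)$ in $\Z/m\Z$ is $\sum_i \lambda_i \bmod m$, and $V(\lambda)$ has all of its weights in the single coset $\lambda + \mathcal R_m$.

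First I would establish the ``only if'' direction. If $V(\omega_i)\otimes V(\omega_j)\otimes V(\omega_k)$ contains a nonzero $\SL_m(\C)$-invariant vector, then in particular it contains a weight-$0$ vector, so $0$ must lie in the coset $(\omega_i+\omega_j+\omega_k)+\mathcal R_m$; equivalently $\omega_i+\omega_j+\omega_k \in \mathcal R_m$, which in $\mathcal W_m/\mathcal R_m\cong\Z/m\Z$ says precisely $i+j+k\equiv 0 \pmod m$. This half is essentially a one-line consequence of the quotient computation recalled in the excerpt.

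The ``if'' direction is where the real content lies, since having the right residue only guarantees a zero weight vector exists, not an invariant one. Here I would use the concrete model $V(\omega_\ell)=\bigwedge^\ell(\C^m)$ together with the isomorphisms $(\bigwedge^\ell \C^m)^*\cong \bigwedge^{m-\ell}\C^m$ (as $\SL_m(\C)$-representations, using the volume form), which identifies $V(\omega_\ell)^* = V(\omega_{m-\ell})$. Then $[V(\omega_i)\otimes V(\omega_j)\otimes V(\omega_k)]^{\SL_m(\C)} \cong \mathrm{Hom}_{\SL_m(\C)}\!\big(V(\omega_{m-k}),\, V(\omega_i)\otimes V(\omega_j)\big)$, so it suffices to show $V(\omega_{m-k})$ appears in $V(\omega_i)\otimes V(\omega_j) = \bigwedge^i\C^m \otimes \bigwedge^j\C^m$ whenever $i+j\equiv m-k \pmod m$, i.e. whenever $i+j \equiv -k$. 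Writing $i+j = m-k$ or $i+j = 2m-k$ (the only possibilities since $0\le i,j,k\le m-1$ force $0\le i+j\le 2m-2$), I would exhibit the relevant highest weight vector explicitly: if $i+j\le m$, the wedge $e_1\wedge\cdots\wedge e_i$ tensored with $e_{i+1}\wedge\cdots\wedge e_{i+j}$ generates a copy of $\bigwedge^{i+j}\C^m=V(\omega_{i+j})$; and more generally the Pieri-type rule for $\bigwedge^i\C^m\otimes\bigwedge^j\C^m$ (a special case of the Littlewood–Richardson rule, or just the classical decomposition into representations $V(\lambda)$ with $\lambda$ a ``fat hook'') contains $V(\omega_{i+j})$ when $i+j\le m$. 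When $i+j>m$, the same rule produces $V$ of highest weight $(2^{\,i+j-m},1^{\,2m-i-j-?})$ — more cleanly, dualize again: $\bigwedge^i\C^m = V(\omega_i) = V(\omega_{m-i})^*$, reduce to the case $i'+j' = (m-i)+(m-j) = 2m-i-j \le m$, and apply the previous case.

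The main obstacle I anticipate is organizing the ``if'' direction cleanly across the two ranges $i+j\le m$ and $i+j>m$ without an unwieldy case analysis; the cleanest route is to reduce everything, via the duality $V(\omega_\ell)^*\cong V(\omega_{m-\ell})$, to the single statement that $\bigwedge^a\C^m\otimes\bigwedge^b\C^m$ contains $\bigwedge^{a+b}\C^m$ when $a+b\le m$, which is transparent from exterior algebra. A mild subtlety is that the Proposition as stated claims an ``if and only if'' for the existence of an invariant, so one must be careful that the Pieri rule gives multiplicity exactly one for the relevant summand (consistent with the multiplicity-freeness lemma just proved) — but only nonvanishing is actually needed, so this does not obstruct the argument.
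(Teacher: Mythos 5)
Your proof is correct, and for the sufficiency half it takes a genuinely different route from the paper. The necessity direction is the same in both: every weight of $V(\omega_i)\otimes V(\omega_j)\otimes V(\omega_k)$ lies in the coset $\omega_i+\omega_j+\omega_k+\mathcal{R}_m$, and $[\omega_\ell]=\ell$ in $\mathcal{W}_m/\mathcal{R}_m\cong\Z/m\Z$, so an invariant (weight-$0$) vector forces $i+j+k\equiv 0$. For sufficiency, after the common reduction by duality to the case $i+j+k=m$, the paper exhibits the invariant vector explicitly as the image of the determinant form under the threefold coproduct $\bigwedge^m\C^m\to\bigwedge^i\C^m\otimes\bigwedge^j\C^m\otimes\bigwedge^k\C^m$, namely $\sum_{I\sqcup J\sqcup K=[m]}(-1)^{\sigma(I,J,K)}z_I\otimes z_J\otimes z_K$; you instead use $V(\omega_k)^*\cong V(\omega_{m-k})$ and Hom--tensor adjunction to reduce to the claim that $\bigwedge^i\C^m\otimes\bigwedge^j\C^m$ contains $\bigwedge^{i+j}\C^m$, which follows from equivariance and surjectivity of the wedge product map. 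One small imprecision: the vector $e_1\wedge\cdots\wedge e_i\otimes e_{i+1}\wedge\cdots\wedge e_{i+j}$ is not a highest weight vector (raising operators mix the two factors), so ``generates a copy'' should be read as ``maps onto a copy under the wedge product''; your fallback to the Pieri rule, or simply to semisimplicity plus surjectivity of multiplication, closes this. Both arguments ultimately rest on exterior-algebra multiplication, but the paper's explicit invariant vector buys something extra downstream: in the very next proposition the individual terms $z_I\otimes z_J\otimes z_K$ are sorted into $\SL_2(1,m)$-isotypic components to show that this invariant has level $1$, a computation your more abstract argument would not directly support.
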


\begin{proof}
If the tensor product $V(\omega_i) \otimes V(\omega_j) \otimes V(\omega_k)$ has an invariant, then  the weight $\omega_j + \omega_k - \omega_{m-i}$ is a member of the root lattice of $\SL_m(\C).$ This means that the class $[\omega_j] + [\omega_k] - [\omega_{m-i}] = 0$ in the group $\mathcal{W}_m / \mathcal{R}_m.$ As $\mathcal{W}_m / \mathcal{R}_m= \Z/m\Z$ with $[\omega_i] = i$, this implies that there is an invariant vector
in  $V(\omega_i) \otimes V(\omega_j) \otimes V(\omega_k)$ only if $i +j + k = 0$ modulo $m.$ This proves
that the condition $i + j + k \in m\Z$ is necessary.

For sufficiency, note that $i + j + k$ above must be $m$ or $2m$. The space $V(\omega_i) \otimes V(\omega_j) \otimes V(\omega_k)$ has an invariant vector if and only if $V(\omega_{m-i}) \otimes V(\omega_{m-j}) \otimes V(\omega_{m-k})$ does as well by duality, so we assume without loss of generality that $i + j + k = m$. The representation $V(\omega_k)$ is the exterior product $\bigwedge^k(\C^m)$ as a vector space, and has a basis of exterior forms $z_I = z_{i_1} \wedge \ldots \wedge z_{i_k}$, $I = \{i_1, \ldots, i_k\}.$ The determinant form $z_1\wedge \ldots \wedge z_m$ can be expressed in these basis members. 

\begin{equation}\label{det}
z_1\wedge \ldots \wedge z_m = \sum_{I \coprod J \coprod K = [m]} (-1)^{\sigma(I,J,K)} z_I \otimes z_J \otimes z_K\\
\end{equation}

Here $(-1)^{\sigma(I, J, K)}$ is a sign that depends on the indices $I, J, K$. Recall that the determinant form is the unique basis member of $\bigwedge^m(\C^m),$ which is the trivial representation of $\SL_m(\C).$ This implies that $V(\omega_i) \otimes V(\omega_j)\otimes V(\omega_k)$ has an invariant vector when $i + j + k =m,$ and when $i + j + k = 2m$ by duality.

\end{proof}

Now we can determine the dimension of the spaces $V_{0, 3}(\omega_i, \omega_j, \omega_k, 1)$.  Note
that this space has dimension $1$ or $0,$ and the same duality considerations allow us to restrict to the case $i + j + k = m,$ as above.  

\begin{proposition}
The space $V_{0, 3}(\omega_i, \omega_j, \omega_k, 1)$ with $i + j + k = 0$ modulo $m$ has dimension $1.$ 
\end{proposition}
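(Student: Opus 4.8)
The plan is to combine the upper bound coming from Proposition~\ref{ueno} with a lower bound witnessing a nonzero conformal block. By the previous proposition (the one establishing $i+j+k\equiv 0 \bmod m$ as the criterion for existence of an invariant vector) together with the multiplicity-freeness lemma, when $i+j+k\equiv 0 \bmod m$ the invariant space $[V(\omega_i)\otimes V(\omega_j)\otimes V(\omega_k)]^{\SL_m(\C)}$ is exactly $1$-dimensional, spanned by the vector extracted from the determinant form~\eqref{det}. By Proposition~\ref{ueno}, $V_{0,3}(\omega_i,\omega_j,\omega_k,1)=W_1\cap [V(\omega_i)\otimes V(\omega_j)\otimes V(\omega_k)]^{\SL_m(\C)}$, so this space is either $0$ or $1$-dimensional, and it is $1$-dimensional precisely when the spanning invariant vector lies in $W_1$. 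Hence the whole problem reduces to checking that the determinant invariant vector survives the cutoff defining $W_1$.

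So the key step is to analyze the $\SL_2(1,m)$-decomposition of the relevant vectors. Restricting $V(\omega_k)=\bigwedge^k(\C^m)$ to the subgroup $\SL_2(1,m)$, which acts only on the coordinates indexed $1$ and $m$, each basis form $z_I$ lies in an irreducible $\SL_2$-summand $V(r)$ with $r\in\{0,1\}$: we get $r=1$ exactly when $I$ contains precisely one of $\{1,m\}$, and $r=0$ when $I$ contains both or neither. Therefore in the expansion~\eqref{det} of $z_1\wedge\cdots\wedge z_m$, since $\{1,\dots,m\} = I\coprod J\coprod K$, the indices $1$ and $m$ are distributed among $I,J,K$; each term $z_I\otimes z_J\otimes z_K$ contributes $\SL_2$-highest weights $(r_I,r_J,r_K)$ where $r_I+r_J+r_K$ equals the number of blocks among $I,J,K$ receiving exactly one of $\{1,m\}$, which is either $0$ (both $1$ and $m$ in the same block) or $2$ (they are in different blocks). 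In all cases $r_I+r_J+r_K\le 2 = 2L$ for $L=1$, so every term of~\eqref{det} lies in $W_1$, and hence the invariant vector lies in $W_1$. This shows $V_{0,3}(\omega_i,\omega_j,\omega_k,1)$ is nonzero, and combined with the upper bound it is exactly $1$-dimensional.

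I expect the main obstacle to be the bookkeeping of how the $\SL_2(1,m)$-representation type of a basis form $z_I$ depends on $I\cap\{1,m\}$, and more precisely the point that the invariant vector of the tensor product, when expanded in the $W_{\bullet,i}\otimes V(i)$ decomposition, does not pick up $\SL_2$-components beyond those visible in the individual $z_I$ terms. One should be slightly careful that $W_1$ is defined via the full isotypic decomposition $V(\lambda)=\bigoplus_i W_{\lambda,i}\otimes V(i)$, not via a choice of basis, so the cleanest argument is to observe that the $\SL_2(1,m)$-subrepresentation generated by the image of the determinant invariant under projection to the triple tensor product is contained in $\bigoplus_{i+j+k\le 2} [\text{stuff}]$, which follows from the explicit expansion~\eqref{det} once we note that the top $\SL_2$-weight appearing is $2$. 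Everything else — the multiplicity-free reduction, the duality to handle $i+j+k=2m$, and the dimension count — is immediate from the results already established.
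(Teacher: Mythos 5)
Your proof is correct and follows essentially the same route as the paper: both combine the multiplicity-free upper bound from Proposition~\ref{ueno} with the explicit determinant invariant of Equation~(\ref{det}), and both verify membership in $W_1$ by tracking how the indices $1$ and $m$ distribute among $I,J,K$ to conclude that the $\SL_2(1,m)$-weights of each term sum to $0$ or $2 \leq 2L$. Your added remark about the cutoff being defined via the isotypic decomposition rather than a basis is a reasonable precaution, but the argument is the same as the paper's.
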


\begin{proof}
We determine the space $W_{\omega_i,s} \otimes V(s) \otimes W_{\omega_j,r} \otimes V(r) \otimes W_{\omega_k,t} \otimes V(t) \subset V(\omega_i) \otimes V(\omega_j) \otimes V(\omega_k)$ which contains the element $z_I \otimes z_J \otimes z_K$ for each element in the expansion of the determinant
in Equation (\ref{det}).  This is determined by which sets $I, J, K$ contain the indices $1$ and $m.$  

If both $1, m$ are contained in the same index set, say $J$, then this form restricts to the determinant form $z_1 \wedge z_m$ of $\SL_2(1, m).$ This implies that $z_I \otimes z_J \otimes z_K$ lies in a component of the form  $W_{\omega_i,0} \otimes V(0) \otimes W_{\omega_j,0} \otimes V(0) \otimes W_{\omega_k,0} \otimes V(0).$  If $1, m$ are in different index sets, say $I, K$, then these give basis members $z_1$ and $z_m$ of the representation $V(1)$ of $\SL_2(1, m)$. This implies that $z_I \otimes z_J \otimes z_K$ lies in a component of the form $W_{\omega_i,1} \otimes V(1) \otimes W_{\omega_j,0} \otimes V(0) \otimes W_{\omega_k,1} \otimes V(1).$  It follows that the maximum value obtained by $s + r + t$ is $2$, and this value is obtained. 
\end{proof}

\subsection{Level $1$ conformal blocks}\label{subsection:level_one_conformal_blocks}

Now we extend the classification of level $1$ conformal blocks to curves $C$
of any genus and number of markings. Our principal tool is the factorization property, Theorem \ref{factor}. This allows us to express a space of level $1$ conformal blocks $V_{C, \vec{p}}(\omega_{i_1}, \ldots, \omega_{i_n}, 1)$ in terms of the spaces of genus $0$, $3$-marked conformal blocks studied above.
There is one such expression for each trivalent graph $\Gamma$.

\begin{equation}
V_{C, \vec{p}}(\omega_{i_1}, \ldots, \omega_{i_n}, 1) = \bigoplus [\bigotimes_{v \in V(\hat{\Gamma})} V_{0, 3}(\omega_{i_v}, \omega_{j_v}, \omega_{k_v}, 1)]\\
\end{equation}

 Each of the tensor product spaces appearing on the right hand side of this equation has dimension $1$ or $0$, so the space $V_{C, \vec{p}}(\omega_{i_1}, \ldots, \omega_{i_n}, 1)$ has a basis in bijection with the set of labellings of the edges of $\hat{\Gamma}$ by fundamental weights such that the following conditions are satisfied. 

\begin{enumerate}\label{con}
\item For each $v \in V(\hat{\Gamma})$ $i_v + j_v + k_v = 0$ modulo $m$.
\item If $\omega_{j_v}$ and $\omega_{k_{v'}}$ are assigned to edges which agree under $\pi: \hat{\Gamma} \to \Gamma$, then $j_v + k_{v'} = 0$ modulo $m.$
\end{enumerate}

The second condition above follows from duality. This allows us to combinatorially determine the dimension of any space of level $1$ $\SL_m(\C)$ conformal blocks. 

\begin{theorem}\label{theorem:dimensions_of_degree_one_components}
The space $V_{C, \vec{p}}(\omega_{i_1}, \ldots, \omega_{i_n}, 1)$ has dimension $0$ or $m^g$.
It has dimension $m^g$ precisely when $\sum i_j = 0$ modulo $m.$
\end{theorem}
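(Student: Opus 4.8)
The plan is to realize $V_{C,\vec p}(\omega_{i_1},\dots,\omega_{i_n},1)$ via the factorization expansion over a fixed trivalent graph $\Gamma$ of type $g,n$, and then count the edge-labellings of $\hat\Gamma$ satisfying conditions (1) and (2). By the discussion preceding the theorem, the dimension of the conformal block space equals the number of such labellings, since each genus $0$ triple-marked summand contributes dimension $1$ or $0$. So the entire problem reduces to the following combinatorial count: label each edge of $\hat\Gamma$ by a fundamental weight (equivalently, by an element of $\Z/m\Z$, identifying $\omega_i \leftrightarrow i$ and $0 \leftrightarrow 0$), such that at each internal vertex the three incident labels sum to $0 \pmod m$, and such that the two half-edges of each split internal edge of $\Gamma$ carry opposite labels mod $m$.

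First I would reformulate the count directly on $\Gamma$ rather than on $\hat\Gamma$: condition (2) says precisely that a labelling of $\hat\Gamma$ descends to a well-defined $\Z/m\Z$-labelling of the edge set $E(\Gamma)$ (each internal edge of $\Gamma$ gets a single consistent value, and the leaf edges are fixed to $i_1,\dots,i_n$). Condition (1) then becomes: at each vertex $v \in V(\Gamma)$, the sum of the labels on the three incident edges is $0 \pmod m$. So I am counting $\Z/m\Z$-valued functions $f$ on $E(\Gamma)$ with prescribed values $i_j$ on the leaves, subject to one linear relation per internal vertex. This is exactly the computation of a coset (or its size) inside the space of all such labellings, governed by the incidence structure of $\Gamma$ over $\Z/m\Z$.

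Next, the linear-algebra / homology count: the vertex relations form a linear system over $\Z/m\Z$ whose coefficient structure is the incidence map of the graph. Summing all vertex relations, every internal edge is counted twice (once at each endpoint) and every leaf edge once, so the sum of all relations is $2\sum_{\text{internal }e} f(e) + \sum_j i_j \equiv 0$; more usefully, one shows the system is consistent if and only if $\sum_j i_j \equiv 0 \pmod m$, and when consistent the solution set is a torsor over the kernel $K$ of the vertex-incidence map restricted to internal edges. That kernel is the cycle space of $\Gamma$ with $\Z/m\Z$ coefficients — $\Z/m\Z$-valued flows supported on internal edges vanishing at each vertex — which has size $m^g$ because the first Betti number of $\Gamma$ is $g$. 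Hence the count is $0$ when $\sum_j i_j \not\equiv 0$, and $m^g$ when $\sum_j i_j \equiv 0$. The cleanest way to package this is: the free abelian group on $E(\Gamma)$ modulo the vertex relations is, after tensoring with $\Z/m\Z$, related to $H_0$ and $H_1$ of $\Gamma$; the obstruction class lives in an $H_0$-type group and the solution torsor is modeled on $H_1(\Gamma;\Z/m\Z) \cong (\Z/m\Z)^g$.

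I expect the main obstacle to be handling the consistency condition carefully when $m$ is even (where $2$ is a zero-divisor in $\Z/m\Z$), so that the naive "sum all the relations" argument does not immediately give sharpness. The fix is to argue consistency structurally rather than by that single linear combination: either induct on the number of internal edges / vertices, repeatedly using that a trivalent vertex with two determined incident labels forces the third, until the graph is reduced to its cycles where the count $m^g$ is transparent; or, equivalently, exhibit an explicit solution when $\sum i_j \equiv 0$ by choosing a spanning tree of $\Gamma$, propagating labels along it (which is forced and consistent precisely because the total leaf sum vanishes), and then noting the $g$ non-tree edges can be assigned freely with compensating adjustments, each free choice multiplying the count by $m$. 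This spanning-tree argument simultaneously proves consistency and pins down the torsor size as exactly $m^g$, avoiding all parity subtleties. A secondary, more bookkeeping-level point is verifying that conditions (1)–(2) on $\hat\Gamma$ really are equivalent to this descended system on $\Gamma$ with the stated leaf constraints — but this is immediate from the definition of $\pi_\Gamma\colon\hat\Gamma\to\Gamma$ and the duality statement $V(\omega_i)^* = V(\omega_{m-i})$ already recorded in the excerpt.
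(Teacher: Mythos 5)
Your argument is correct, but it takes a different route from the paper's. Both proofs reduce, via factorization and the one-dimensionality of the level $1$ tripod spaces, to counting edge-labellings of $\hat{\Gamma}$ subject to conditions (1) and (2); the difference is in how the count is performed. The paper exploits the freedom to choose the trivalent graph and picks a specific graph $\Gamma_{g,n}$ (loops strung along a path with the leaves at the far end), for which the $m^g$ choices are visible by inspection: each loop carries an arbitrary dual pair $(\omega_i,\omega_{m-i})$, which forces a $0$ on the connecting edges and decouples the loops from the tree part; the remaining $g=0$ case is then handled by the same leaf-peeling induction you describe. You instead work with an arbitrary trivalent graph and identify the solution set as a torsor over the cycle space $H_1(\Gamma;\Z/m\Z)\cong(\Z/m\Z)^g$, with consistency governed by the augmentation. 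Your approach is more uniform and conceptual — it explains \emph{why} the answer is $m^g$ independently of the graph, rather than relying on the graph-independence supplied by the factorization theorem — at the cost of a little more linear-algebra bookkeeping. One small point of care in your write-up: condition (2) makes the two half-edge labels mutually \emph{dual} ($j_v+k_{v'}\equiv 0$), not equal, so the descended system on $E(\Gamma)$ is the \emph{oriented} incidence system; with that orientation in place the "sum all relations" computation actually gives $\sum_j i_j\equiv 0$ cleanly (the internal contributions cancel in pairs as $a_e+(-a_e)$), so the parity worry about even $m$ evaporates, and your spanning-tree argument correctly pins down both consistency and the torsor size in any case.
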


\begin{proof}
It is a luxury of the factorization theorem that we may use any trivalent graph $\Gamma$
we wish in the analysis of the dimension of $V_{C, \vec{p}}(\omega_{i_1}, \ldots, \omega_{i_n}, 1)$.
We employ the graph $\Gamma_{g, n}$, depicted in Figure~\ref{fig:virus}.

\begin{figure}[htbp]
\centering
\includegraphics[scale = 0.4]{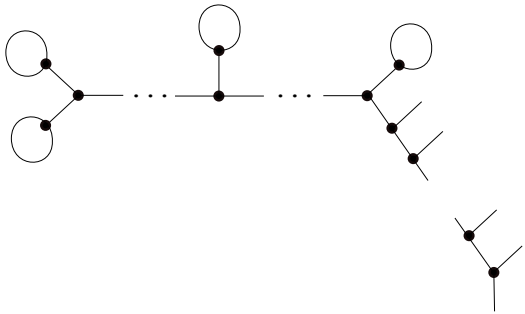}
\caption{The graph $\Gamma_{g, n}$}
\label{fig:virus}
\end{figure}

The paired edges in $\hat{\Gamma}_{g, n}$ which correspond to the loops must each
be assigned dual fundamental weights.  This forces the third edge in each of these tripods
to be assigned $0$.  This forces each edge to the left of $e$ to be assigned a $0$. 
Note that there are $m^g$ such assignments of fundamental weights and their duals on 
these loop edges.  

This reduces the problem to showing that when $g = 0,$ there is a unique
labelling if and only if $\sum i_j = 0$ modulo $m$.  We have already seen that
this is the case if $n = 3.$ The general result follows from the observation that
in the case of a trivalent tree $\tree,$ an assignment of weights to the leaves of $\hat{\tree}$ determines the assignment on the rest of the tree by the equation $i_v + j_v + k_v = 0$ modulo $m$. 
This follows because every trivalent tree $\tree$ must have a vertex $v$ which is connected to two leaves.  Using the equation $i_v + j_v + k_v = 0$ modulo $m$ to force the weighting on the third edge
connected to this tripod reduces the case by $1$ leaf, so we obtain the result by induction. 
\end{proof}

There are $m^{n-1}$ $n$-tuples of fundamental weights $(\omega_{i_1}, \ldots, \omega_{i_n})$
such that $\sum i_j = 0$ modulo $m$. It follows that the space of all $\SL_m(\C)$ conformal blocks of level $1$ has dimension $m^{g + n - 1}.$

\section{Phylogenetic algebraic geometry}\label{section:phylogenetic_algebraic_geometry}

Phylogenetic algebraic geometry studies algebraic varieties associated with statistical models on phylogenetic trees, see \cite{ERSS05} for an introductory article on the topic. These statistical models on phylogenetic trees can be parametrized by polynomial maps that give the explicit description of the associated algebraic variety. 

Group-based models are statistical models invariant under the action of a finite abelian group. Algebraic varieties associated with group-based models are toric after the change of coordinates given by the discrete Fourier transform~\cite{ES,SSE}. By the well-known correspondence between toric varieties and polyhedral fans, many properties of group-based models can be studied by considering corresponding discrete objects. In the following, we will define lattice polytopes and affine semigroups associated with group-based models.

\begin{definition}\label{symmetric_group_based_model}
Let  $A$ be an additive finite abelian group and $\tree$ a tree. Let $\hat{\tree}$ be the forest obtained from $\tree$ by splitting every non-leaf edge of $\tree$. Label the edges of $\hat{\tree}$ with unit vectors indexed by the elements of $A$ such that
\begin{enumerate}
 \item at each inner vertex the indices of unit vectors sum up to zero,
 
 \item the indices of unit vectors corresponding to an inner edge of $\tree$ sum up to zero.
\end{enumerate}
 Define the polytope $P_{\tree}(A)$ as the convex hull of vectors corresponding to all such labellings of the edges of $\tree$. Define the lattice $L_{\tree}(A)$ to be the lattice generated by the vertices of $P_{\tree}(A)$. We denote the polytope associated with a group $A$ and the tripod  by $P_{0,3}(A)$ and the corresponding lattice by $L_{0,3}(A)$.
\end{definition}

Let $e$ be an edge of $\hat{\tree}$ and $a$ an element of $A$. Denote the coordinate corresponding to the tuple $(e,a)$ by $x^{e}_a$.  
For $x\in P_{\tree}(A)$, we have
\begin{equation}
\sum _{a\in A} x^{e}_a=1.
\end{equation}
Hence the projection of $P_{\tree}(A)$ that forgets the coordinates corresponding to $0\in A$ is lattice equivalent to $P_{\tree}(A)$. From now on we will consider this projection of $P_{\tree}(A)$ and we will also denote it by $P_{\tree}(A)$.

\begin{example}\label{example:Jukes_Cantor_binary_model}
The polytope associated with $\Z/2\Z$ and a claw tree $\tree$ is 
\begin{equation}
P_{\tree}(\Z/2\Z)=\conv \{x\in \{0,1\}^{E(\tree)}|\sum _{e\in E(\tree)} x_e \textrm{ is even}\},
\end{equation}
where $E(\tree)$ denotes the set of edges of $\tree$. In particular, the polytope associated with $\Z/2\Z$ and tripod is
\begin{equation}
 P_{0,3}(\Z/2\Z)=\conv \{(0,0,0),(0,1,1),(1,0,1),(0,1,1)\}.
\end{equation}
The group-based model associated with group $\Z/2\Z$ is called the Jukes-Cantor binary model.
\end{example}

Note the similarity of the conditions (1) and (2) in Definition~\ref{symmetric_group_based_model} with the conditions at the beginning of Section~\ref{subsection:level_one_conformal_blocks}. These conditions will be the main ingredient for establishing the relationship between group-based models, conformal block algebras and BZ triangles in Sections~\ref{section:conformal_block_algebras_and_group_based_models} and~\ref{section:BZ_triangles_and_group_based_models}. 

Definition~\ref{symmetric_group_based_model} differs from the standard definition of a lattice polytope associated with a group-based model and a tree. For the standard definition, one needs to direct the edges of $\tree$ and the definition depends on the orientation we choose, see for example~\cite{SS05} and~\cite[Section 3.4]{Sullivant07}. Different ways of directing the edges of $\tree$ give lattice equivalent lattice polytopes. Also the polytope in Definition~\ref{symmetric_group_based_model} is lattice equivalent to all of them. 

The advantage of Definition~\ref{symmetric_group_based_model} is that it does not depend on the orientation of $\tree$ we choose. This makes it simpler to establish a connection with conformal block algebras and BZ triangles. Moreover, Definition~\ref{symmetric_group_based_model} makes it possible to define affine semigroups associated with a group $A$ and a graph $\Gamma$ generalizing the definition of the phylogenetic semigroup for the group $\Z /2\Z$ and a graph $\Gamma$ in~\cite{Buczynska12,BBKM13}.

\begin{definition}\label{def:phylogenetic_semigroup_on_a_tree}
The affine semigroup associated with an additive finite abelian group $A$ and a tree $\tree$ is
\begin{displaymath}
R_{\tree}(A)=\N((P_{\tree}(A)\cap L_{\tree}(A))\times \{1\}).
\end{displaymath}
We denote the semigroup associated with a group $A$ and the tripod by $R_{0,3}(A)$. The associated graded lattice is
\begin{displaymath}
L_{\tree}^{\gr}(A)=L_{\tree}(A) \oplus \Z.
\end{displaymath}
 
\end{definition}

 Using the definition for trees, we can define the semigroup associated with a group-based model and a graph.
Let $\Gamma$ be a graph with first Betti number $g$. We associate a tree $\tree$ with $g$ pairs of distinguished leaves with $\Gamma$. Choose an edge $e_1$ of $\Gamma$ such that splitting $e_1$ gives a graph with first Betti number $g-1$. Denote the new leaf edges obtained from $e_1$ by $\overline{e_1}$ and $\underline{e_1}$. Repeating this operation, we get an associated tree $\tree$ with $g$ pairs of distinguished leaves $\{\overline{e_1},\underline{e_1}\},\{\overline{e_2},\underline{e_2}\},\ldots ,\{\overline{e_g},\underline{e_g}\} $. 

\begin{definition}\label{def:phylogenetic_semigroup_on_a_graph}
The affine semigroup associated with an additive finite abelian group $A$ and a graph $\Gamma$ is
\begin{displaymath}
R_{\Gamma}(A)=R_{\tree}(A) \cap \bigcap_{i\in\{1,\ldots ,g\} \textrm{ and } a\in A} \ker (x_a^{\overline{e_i}}-x_{-a}^{\underline{e_i}}).
\end{displaymath}
The associated graded lattice is
\begin{displaymath}
L_{\Gamma}^{\gr}(A)=L_{\tree}^{\gr}(A) \cap \bigcap_{i\in\{1,\ldots ,g\} \textrm{ and } a\in A} \ker (x_a^{\overline{e_i}}-x_{-a}^{\underline{e_i}}).
\end{displaymath}
\end{definition}

Although the tree $\tree$ associated with $\Gamma$ is not unique, the definitions of $R_{\Gamma}(A)$ and $L_{\Gamma}^{\gr}(A)$ do not depend on the choices we make.

\begin{lemma}\label{lemma:group_based_semigroup}
Let $A$ be an additive finite abelian group and $\Gamma$ a graph. Denote the edges obtained by splitting an inner edge $e$ of $\Gamma$ by $\overline{e}$ and $\underline{e}$. Then
\begin{equation}
R_{\Gamma}(A)= \prod_{v\in V(\Gamma)} R_{0,3}(A) \cap \bigcap _{e \textrm{ an inner edge and } a\in A} \ker(x_a^{\overline{e}}-x_{-a}^{\underline{e}}),
\end{equation}
where $V(\Gamma)$ is the set of non-leaf vertices of $\Gamma$.
\end{lemma}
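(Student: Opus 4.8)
The plan is to unwind the definition of $R_\Gamma(A)$ given in Definition~\ref{def:phylogenetic_semigroup_on_a_graph} and identify the iterated splitting/gluing of edges with a two-step process: first split \emph{all} non-leaf edges of $\Gamma$, obtaining the forest $\hat\Gamma$; then re-glue those edges which are not among the $g$ chosen edges $e_1,\dots,e_g$. Concretely, I would first treat the case where $\Gamma$ is itself a tree $\tree$, and show $R_{\tree}(A) = \prod_{v\in V(\tree)} R_{0,3}(A) \cap \bigcap_{e,a}\ker(x_a^{\overline e}-x_{-a}^{\underline e})$, the intersection over inner edges $e$ of $\tree$. This is essentially the tree analogue of the factorization of conformal blocks (Theorem~\ref{factor}): a labelling of the edges of $\hat{\tree}$ satisfying condition (1) of Definition~\ref{symmetric_group_based_model} at every inner vertex is exactly a choice of one point of $P_{0,3}(A)\cap L_{0,3}(A)$ at each vertex $v\in V(\tree)$, and the gluing constraint ``the two halves of an inner edge have opposite $A$-labels'' is exactly $\ker(x_a^{\overline e}-x_{-a}^{\underline e})$. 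Extending from vertices of $P_{\tree}(A)$ to lattice points of $R_{\tree}(A)$ in each graded degree $d$ is immediate because the constraints are linear and the product decomposition is compatible with the grading.

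For the general graph case, I would then combine this tree statement with Definition~\ref{def:phylogenetic_semigroup_on_a_graph}. By construction $\tree$ is obtained from $\Gamma$ by splitting the $g$ edges $e_1,\dots,e_g$, so the inner edges of $\tree$ are precisely the inner edges of $\Gamma$ other than $e_1,\dots,e_g$, while $\{\overline{e_i},\underline{e_i}\}$ become leaf edges of $\tree$. Applying the tree result to $R_{\tree}(A)$ already yields the product $\prod_{v\in V(\Gamma)}R_{0,3}(A)$ (note $V(\tree)=V(\Gamma)$, since splitting an edge creates no new non-leaf vertex) intersected with $\bigcap \ker(x_a^{\overline e}-x_{-a}^{\underline e})$ over all inner edges $e$ of $\tree$. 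Then intersecting further with $\bigcap_{i,a}\ker(x_a^{\overline{e_i}}-x_{-a}^{\underline{e_i}})$, as prescribed in Definition~\ref{def:phylogenetic_semigroup_on_a_graph}, adjoins exactly the kernels for the remaining inner edges $e_1,\dots,e_g$ of $\Gamma$. Since $\{$inner edges of $\tree\} \cup \{e_1,\dots,e_g\}$ is precisely the set of inner edges of $\Gamma$, the two intersections together give $\bigcap_{e\text{ inner edge of }\Gamma,\,a\in A}\ker(x_a^{\overline e}-x_{-a}^{\underline e})$, which is the claimed formula.

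The step requiring the most care — and the one I expect to be the main obstacle — is the base case for the tree: verifying that $R_{\tree}(A)$, defined via the polytope $P_{\tree}(A)$ and its lattice $L_{\tree}(A)$, really does decompose as the fiber product of the $R_{0,3}(A)$ along the gluing kernels, \emph{as semigroups} and not merely as polytopes. One must check that a lattice point of the fiber product in degree $d$ arises from an honest degree-$d$ labelling of $\hat\tree$, i.e. that compatibility of labels edge-by-edge assembles to a global admissible labelling; this is where the tree structure (acyclicity) is used, and it mirrors the inductive ``peel off a cherry'' argument in the proof of Theorem~\ref{theorem:dimensions_of_degree_one_components}. Once this is established the passage to arbitrary $\Gamma$ is bookkeeping about which edges have been split. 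I would also remark that the well-definedness (independence of the choice of $\tree$ and of the edges $e_i$) claimed after Definition~\ref{def:phylogenetic_semigroup_on_a_graph} is a corollary of this lemma, since the right-hand side manifestly depends only on $\Gamma$.
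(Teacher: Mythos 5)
Your proposal is correct and follows the same route as the paper, whose entire proof is the single sentence that the statement follows from Definitions~\ref{symmetric_group_based_model}, \ref{def:phylogenetic_semigroup_on_a_tree} and~\ref{def:phylogenetic_semigroup_on_a_graph}. You in fact supply the one genuinely nontrivial point the paper leaves implicit --- that a degree-$d$ element of the fiber product over a tree can be matched up edge by edge (using acyclicity) into $d$ global labellings, so the right-hand side is not larger than $R_{\tree}(A)$ --- and your bookkeeping identifying the inner edges of $\Gamma$ with those of $\tree$ together with $e_1,\dots,e_g$ is exactly what makes the reduction to Definition~\ref{def:phylogenetic_semigroup_on_a_graph} work.
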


\begin{proof}
 The statement follows from Definitions~\ref{symmetric_group_based_model},~\ref{def:phylogenetic_semigroup_on_a_tree} and~\ref{def:phylogenetic_semigroup_on_a_graph}. 
\end{proof}

The algebraic variety $M_{\Gamma}(A)$ associated with an abelian group $A$ and a graph $\Gamma$ is defined as the projective spectrum $\Proj(\C[\cone(R_{\Gamma}(A))\cap L_{\Gamma}^{\gr}(A)])$. We will show that the affine semigroup $\cone(R_{\Gamma}(A))\cap L_{\Gamma}^{\gr}(A)$ is the saturation of the affine semigroup $R_{\Gamma}(A)$. Recall that an affine semigroup $S \subset L$ is said to be saturated if $m\cdot u \in S$ if and only if $u \in S$ for all $m \in \Z_{\geq 0}.$ An affine semigroup can always be completed to a saturated affine semigroup $\overline{S}$ by formally including all $u \in L$ such that $m\cdot u \in S.$

\begin{proposition}\label{prop:phylogenetic_semigroup_is_saturation_of_our_semigroup}
Let $\Gamma$ be a graph and $A$ an additive abelian group. Then
\begin{equation}
\overline{R}_{\Gamma}(A) = \cone(R_{\Gamma}(A))\cap L_{\Gamma}^{\gr}(A).
\end{equation}
\end{proposition}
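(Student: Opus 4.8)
The plan is to prove the two inclusions separately. The inclusion $\overline{R}_{\Gamma}(A) \subseteq \cone(R_{\Gamma}(A)) \cap L_{\Gamma}^{\gr}(A)$ is the easy direction: any element of the saturation lies in $L_{\Gamma}^{\gr}(A)$ by construction (the saturation is taken inside this lattice), and if $m\cdot u \in R_{\Gamma}(A)$ then $u = \tfrac1m(m\cdot u)$ lies in $\cone(R_{\Gamma}(A))$. The substance is the reverse inclusion, which amounts to showing that $\cone(R_{\Gamma}(A)) \cap L_{\Gamma}^{\gr}(A)$ is generated as a semigroup by lattice points whose suitable multiples lie in $R_{\Gamma}(A)$; equivalently, that the semigroup $\cone(R_{\Gamma}(A)) \cap L_{\Gamma}^{\gr}(A)$ is saturated and contains $R_{\Gamma}(A)$ with the same group completion, so it coincides with $\overline{R}_{\Gamma}(A)$. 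Since $\cone(R_{\Gamma}(A)) \cap L$ is automatically saturated for any lattice $L$, the real content is that the lattice $L_{\Gamma}^{\gr}(A)$ is the correct ambient lattice, i.e. that $L_{\Gamma}^{\gr}(A)$ equals the group generated by $R_{\Gamma}(A)$ (or at least that no lattice points of $L_{\Gamma}^{\gr}(A)$ inside the cone are "missed" because they fail to be in the group spanned by $R_{\Gamma}(A)$).

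First I would handle the tree case, $\Gamma = \tree$. Here $R_{\tree}(A) = \N((P_{\tree}(A) \cap L_{\tree}(A)) \times \{1\})$ and $L_{\tree}^{\gr}(A) = L_{\tree}(A) \oplus \Z$. One needs that $L_{\tree}(A)$ is exactly the group generated by $P_{\tree}(A) \cap L_{\tree}(A)$ — which holds by definition, since $L_{\tree}(A)$ is defined as the lattice generated by the vertices of $P_{\tree}(A)$, and these vertices are lattice points of $P_{\tree}(A)$ lying in the degree-one slice. Then $\cone(R_{\tree}(A)) \cap L_{\tree}^{\gr}(A)$ consists of all pairs $(w, d)$ with $d \in \Z_{\geq 0}$, $w \in L_{\tree}(A)$, and $w \in d\cdot P_{\tree}(A)$; by standard polytope arguments (e.g. the cone over a lattice polytope has its lattice points at level $d$ equal to $dP \cap L$), and since some multiple of any such point decomposes, one gets that a multiple lies in $R_{\tree}(A)$. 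This identifies $\cone(R_{\tree}(A)) \cap L_{\tree}^{\gr}(A) = \overline{R}_{\tree}(A)$. Next I would pass to a general graph $\Gamma$ by intersecting with the kernels $\ker(x_a^{\overline{e_i}} - x_{-a}^{\underline{e_i}})$, using Definition~\ref{def:phylogenetic_semigroup_on_a_graph}. The key point is that intersecting a saturated affine semigroup with a rational linear subspace (defined over $\Q$) again yields a saturated affine semigroup, and that $\cone$ commutes with such intersections: $\cone(R_{\tree}(A) \cap H) = \cone(R_{\tree}(A)) \cap H$ when $H$ is a linear subspace cut out by equations, because the defining equations are homogeneous. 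Combining this with $L_{\Gamma}^{\gr}(A) = L_{\tree}^{\gr}(A) \cap H$ gives $\cone(R_{\Gamma}(A)) \cap L_{\Gamma}^{\gr}(A) = (\cone(R_{\tree}(A)) \cap L_{\tree}^{\gr}(A)) \cap H = \overline{R}_{\tree}(A) \cap H$, and one checks this last semigroup equals $\overline{R}_{\Gamma}(A)$.

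The main obstacle I expect is the step $\cone(R_{\Gamma}(A)) \cap L_{\Gamma}^{\gr}(A) \subseteq \overline{R}_{\Gamma}(A)$, specifically verifying that every lattice point of $L_{\Gamma}^{\gr}(A)$ lying in the rational cone over $R_{\Gamma}(A)$ has a positive integer multiple that is actually an $\N$-combination of the degree-one generators of $R_{\Gamma}(A)$. For the tree this is the assertion that $L_{\tree}(A) \cap dP_{\tree}(A)$, scaled up, lands in $\N(P_{\tree}(A) \cap L_{\tree}(A))$; the subtlety is that a lattice point at level $d$ need not itself be a sum of $d$ level-one lattice points (lattice polytopes are not always normal), but a large enough multiple is, which is all that saturation requires. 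For the graph, one must also be careful that the linear conditions do not introduce new "hidden" lattice points: this is why it matters that the equations $x_a^{\overline{e_i}} = x_{-a}^{\underline{e_i}}$ are defined over $\Z$, so that the kernel lattice behaves well under intersection. I would also double-check that the two graded lattices $L_{\tree}^{\gr}(A) \cap H$ and $L_{\Gamma}^{\gr}(A)$ literally coincide (not merely up to finite index), which follows directly from Definition~\ref{def:phylogenetic_semigroup_on_a_graph}, and invoke Lemma~\ref{lemma:group_based_semigroup} to reduce to the tripod building blocks if a more hands-on argument is needed.
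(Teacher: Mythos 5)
Your proposal is correct and has no real gap, but it is considerably more elaborate than the paper's argument, whose engine you do share: a degree-$k$ lattice point $u$ of $\cone(R_{\Gamma}(A))$ has $\tfrac{1}{k}u$ equal to a rational convex combination of degree-one elements of $R_{\Gamma}(A)$, so clearing denominators puts a positive integer multiple of $u$ into $R_{\Gamma}(A)$, and both inclusions then follow from saturatedness. The paper runs this directly for an arbitrary graph $\Gamma$ in a few lines. You instead interpose a reduction to the tree case, an identification of $L_{\tree}(A)$ with the group generated by $P_{\tree}(A)\cap L_{\tree}(A)$, and the commutation of $\cone$ with intersection by the subspaces $\ker(x_a^{\overline{e_i}}-x_{-a}^{\underline{e_i}})$. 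These auxiliary claims are all true (the commutation holds because any rational point of $\cone(R_{\tree}(A))\cap H$ already has an integer multiple in $R_{\tree}(A)\cap H$), but none of them is needed: the paper defines $\overline{S}$ as the set of all $u$ in the \emph{ambient} lattice with $m\cdot u\in S$, so the issue you single out as the real content --- whether $L_{\Gamma}^{\gr}(A)$ equals the group generated by $R_{\Gamma}(A)$ --- simply does not arise, and the denominator-clearing step applies verbatim to $R_{\Gamma}(A)$ for any graph, since its degree-one elements generate it as a semigroup by Definition~\ref{def:phylogenetic_semigroup_on_a_graph}. What your longer route would buy is robustness under the more common convention that saturation is taken inside $\mathrm{gp}(S)$ rather than a fixed ambient lattice; under the paper's convention it is pure overhead.
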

\begin{proof}
We have $\overline{R}_{\Gamma}(A) \subseteq \cone(R_{\Gamma}(A))\cap L_{\Gamma}^{\gr}(A)$, since  ${R}_{\Gamma}(A) \subseteq \cone(R_{\Gamma}(A))\cap L_{\Gamma}^{\gr}(A)$ and $\cone(R_{\Gamma}(A))\cap L_{\Gamma}^{\gr}(A)$ is saturated.

Take a degree $k$ element $u\in \cone(R_{\Gamma}(A))\cap L_{\Gamma}^{\gr}(A)$. Then $\frac{1}{k}u$ is a rational convex combination of degree $1$ elements of $R_{\Gamma}(A)$. Let $m$ be the lowest common denominator of the coefficients in this convex combination. Then $(km)\cdot u\in R_{\Gamma}(A)\subseteq \overline{R}_{\Gamma}(A)$. Since  $\overline{R}_{\Gamma}(A)$ is saturated, we have $u\in \overline{R}_{\Gamma}(A)$. This proves $\overline{R}_{\Gamma}(A) \supseteq \cone(R_{\Gamma}(A))\cap L_{\Gamma}^{\gr}(A)$.
\end{proof}

\begin{corollary}\label{cor:phylogenetic_model_equals_the_proj_of_the_saturation_of_our_semigroup}
Let $\Gamma$ be a graph and $A$ an additive abelian group. Then
\begin{equation}
M_{\Gamma}(A)=\Proj(\C[\overline{R}_{\Gamma}(A)]).
\end{equation}
\end{corollary}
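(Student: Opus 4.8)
The plan is to deduce the statement directly from the definition of $M_{\Gamma}(A)$ together with Proposition~\ref{prop:phylogenetic_semigroup_is_saturation_of_our_semigroup}. By definition $M_{\Gamma}(A) = \Proj(\C[\cone(R_{\Gamma}(A)) \cap L_{\Gamma}^{\gr}(A)])$, where the grading on the semigroup algebra is induced by the degree function on $L_{\Gamma}^{\gr}(A)$, i.e.\ the last $\Z$-coordinate coming from $L_{\tree}^{\gr}(A) = L_{\tree}(A) \oplus \Z$ in Definition~\ref{def:phylogenetic_semigroup_on_a_tree}. Proposition~\ref{prop:phylogenetic_semigroup_is_saturation_of_our_semigroup} identifies $\cone(R_{\Gamma}(A)) \cap L_{\Gamma}^{\gr}(A)$ with $\overline{R}_{\Gamma}(A)$ as affine subsemigroups of $L_{\Gamma}^{\gr}(A)$, so this identification is compatible with the degree function, hence induces an isomorphism of $\Z_{\geq 0}$-graded $\C$-algebras $\C[\overline{R}_{\Gamma}(A)] \cong \C[\cone(R_{\Gamma}(A)) \cap L_{\Gamma}^{\gr}(A)]$. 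Applying $\Proj$ to both sides then gives $M_{\Gamma}(A) = \Proj(\C[\overline{R}_{\Gamma}(A)])$, which is the assertion.

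The one point I would spell out explicitly is that $\Proj$ is being taken of a genuinely graded ring whose degree-zero part is $\C$, so that the right-hand side is a projective variety of the expected kind. This follows because $R_{\Gamma}(A)$ is generated in degree $1$ by construction (Definition~\ref{def:phylogenetic_semigroup_on_a_tree} and Lemma~\ref{lemma:group_based_semigroup}), whence $\cone(R_{\Gamma}(A))$ meets the degree-zero hyperplane only in the origin, and the same is therefore true of its saturation $\overline{R}_{\Gamma}(A)$. Beyond this bookkeeping there is no real obstacle: the corollary is a formal consequence of the preceding proposition, and the whole content of the argument is the substitution of the semigroup equality $\overline{R}_{\Gamma}(A) = \cone(R_{\Gamma}(A)) \cap L_{\Gamma}^{\gr}(A)$ into the defining $\Proj$.
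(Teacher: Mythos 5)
Your argument is exactly the paper's (implicit) one: the corollary is obtained by substituting the identity $\overline{R}_{\Gamma}(A) = \cone(R_{\Gamma}(A)) \cap L_{\Gamma}^{\gr}(A)$ from Proposition~\ref{prop:phylogenetic_semigroup_is_saturation_of_our_semigroup} into the definition $M_{\Gamma}(A) = \Proj(\C[\cone(R_{\Gamma}(A)) \cap L_{\Gamma}^{\gr}(A)])$, and the paper gives no further justification. Your extra remark about compatibility of gradings and degree-one generation is harmless bookkeeping that the paper leaves tacit.
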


Connections between group-based models, conformal block algebras and BZ triangles have developed from a result about the Jukes-Cantor binary model by Buczy{\'n}ska and Wi{\'s}niewski,  and its generalization by Buczy{\'n}ska. 
\begin{theorem}[\cite{BW07} Theorem~2.24, \cite{Buczynska12} Theorem~3.5]\label{thm:Buczynska}
The Hilbert polynomial of the affine semigroup associated with the Jukes-Cantor binary model on a trivalent graph depends only on the number of leaves and the first Betti number of the graph.
\end{theorem}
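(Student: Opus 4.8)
The plan is to prove Theorem~\ref{thm:Buczynska} (the Buczyńska–Wiśniewski result as restated) by reducing the computation of the Hilbert polynomial of $\overline{R}_\Gamma(\Z/2\Z)$ to a purely combinatorial count that manifestly depends only on the number of leaves $n$ and the first Betti number $g$. The key structural input is Lemma~\ref{lemma:group_based_semigroup}, which writes $R_\Gamma(\Z/2\Z)$ as a fiber product of copies of the tripod semigroup $R_{0,3}(\Z/2\Z)$, one per non-leaf vertex, glued along the kernel conditions $x_a^{\overline e} = x_{-a}^{\underline e}$ on inner edges. Since $\Gamma$ is trivalent, $|V(\Gamma)|$ and $|E(\Gamma)|$ are determined by $n$ and $g$ (namely $|V(\Gamma)| = 2g-2+n$ and $|E(\Gamma)| = 3g-3+2n$), so the combinatorial data over which we sum — the labellings of $E(\hat\Gamma)$ by weights in $\{0, \omega_1\}$ (equivalently, $0/1$-labellings) satisfying the parity condition at each vertex and the agreement condition on each inner edge — is indexed by a set whose size depends only on $n$ and $g$.

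First I would make precise the degree-$k$ part of $\overline{R}_\Gamma(\Z/2\Z)$: by Proposition~\ref{prop:phylogenetic_semigroup_is_saturation_of_our_semigroup} this is $\cone(R_\Gamma)\cap L_\Gamma^{\gr}$ in degree $k$, i.e.\ the lattice points of $k\cdot P_\Gamma$ lying in the appropriate affine slice of $L_\Gamma^{\gr}$, where $P_\Gamma$ is the polytope cut out from $\prod_v P_{0,3}(\Z/2\Z)$ by the gluing equalities. The Hilbert polynomial is then the Ehrhart polynomial of this polytope (intersected with its natural lattice). Next I would identify $P_\Gamma$ with a fiber product: a point of $k\cdot P_\Gamma \cap L_\Gamma^{\gr}$ is a tuple of points $u_v \in k\cdot P_{0,3}(\Z/2\Z)\cap L_{0,3}$ that agree on shared inner edges after the sign twist. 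Because each tripod polytope $P_{0,3}(\Z/2\Z) = \conv\{(0,0,0),(0,1,1),(1,0,1),(1,1,0)\}$ has a very simple lattice-point structure — its lattice points in degree $k$ are parametrized by the "interior distances" along the three edges — the fiber product decomposes into a sum over the finitely many valid $0/1$-boundary labellings $\ell$ of $E(\hat\Gamma)$, and for each $\ell$ the count of degree-$k$ lattice points is a product over vertices of one-dimensional Ehrhart contributions, yielding a polynomial in $k$ whose leading behavior is governed only by $|V(\Gamma)|$ and the number of internal edges.

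The step I expect to be the main obstacle is showing that the per-labelling contribution, summed over all admissible labellings, collapses to a polynomial that is genuinely independent of which trivalent graph of type $(g,n)$ we chose — not merely of the crude invariants $|V(\Gamma)|$ and $|E(\Gamma)|$. The subtlety is that the gluing identifications correlate the edge-distance parameters at adjacent tripods, and a priori the resulting lattice-point count could depend on the cycle structure of $\Gamma$, not just on $g$. The resolution is that, over $\Z/2\Z$, the mod-$2$ labelling $\ell$ and the integer "overhang" parameters decouple: once $\ell$ is fixed, each inner edge contributes the same one-variable generating function regardless of how the edges are threaded into cycles, because the only constraint a cycle imposes is that the sum of signed labels around it be zero mod $2$, which is automatically recorded by the count $m^g = 2^g$ of loop labellings seen in Theorem~\ref{theorem:dimensions_of_degree_one_components}. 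I would make this rigorous by choosing, via the graph-collapsing maps $\pi:\Gamma'\to\Gamma$ of Section~2.2, a convenient model graph $\Gamma_{g,n}$ (as in Figure~\ref{fig:virus}) and proving the Hilbert-polynomial count is invariant under the elementary moves relating any two trivalent graphs of the same type — equivalently, checking that sliding an edge or rerouting a loop does not change the fiber-product Ehrhart count. This last invariance is the technical heart; everything before it is bookkeeping with the tripod polytope and Ehrhart reciprocity.
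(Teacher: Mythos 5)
First, note that the paper does not prove Theorem~\ref{thm:Buczynska}: it is quoted from \cite{BW07} and \cite{Buczynska12}, so there is no internal proof to compare your attempt against. Your reduction of the statement to a lattice-point count is sound as far as it goes: by Proposition~\ref{prop:phylogenetic_semigroup_is_saturation_of_our_semigroup} the Hilbert function in question is the Ehrhart-type count of $k\cdot P_{\Gamma}$ in $L^{\gr}_{\Gamma}$, Lemma~\ref{lemma:group_based_semigroup} exhibits this as a fiber product of tripod polytopes glued along inner edges, and for a trivalent graph $|V(\Gamma)|=2g-2+n$ and $|E(\Gamma)|=3g-3+2n$ are indeed functions of $(g,n)$ alone. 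None of this, however, is where the content of the theorem lies.

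The genuine gap is the step you yourself flag as the ``technical heart'': the claim that the fiber-product count is invariant under the elementary moves connecting any two trivalent graphs of type $(g,n)$. That invariance \emph{is} the theorem, and the mechanism you offer for it --- that once a mod-$2$ labelling is fixed, ``each inner edge contributes the same one-variable generating function regardless of how the edges are threaded into cycles'' --- does not hold as stated. Fixing residues does not decouple the integer edge-values: the triangle inequalities and degree bounds at each vertex couple the values on the three incident edges, so for a fixed labelling one is still counting lattice points of a polytope whose facet structure depends on the global combinatorics of $\Gamma$, not evaluating a product of one-dimensional contributions. (The tripod ``interior distance'' parametrization $x_1=a_{12}+a_{13}$, etc., is a bijection only at a single vertex; globally the decomposition of a lattice point into paths is non-unique, which is precisely the difficulty.) Moreover, any argument pitched at this level of generality would prove the same invariance for other groups, and the paper records that the statement is \emph{false} for $\Z/2\Z\times\Z/2\Z$ and for $\Z/m\Z$ with $m=3,4,5,7,8,9$ (\cite{Kubjas12,DBM12}); so a correct proof must exploit something specific to $\Z/2\Z$, e.g.\ the explicit local computation for the quadripod mutation in \cite{BW07} and its extension to graphs with loops in \cite{Buczynska12}, or the deformation-theoretic route through the graph-independent algebra $V_{C,\vec{p}}(\SL_2(\C))$ and its flat degenerations \cite{SX10,Manon09}. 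As written, your proposal reduces the theorem to itself.
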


It has been shown by the first author~\cite{Kubjas12}, Donten-Bury and Micha{\l}ek~\cite{DBM12} that this theorem cannot be generalized to group-based models with underlying groups $\Z_2\times \Z_2, \Z_2\times \Z_2 \times \Z_2,\Z_3, \Z_4, \Z_5, \Z_7, \Z_8,$ or $\Z_9$. This suggests that the invariance of the Hilbert polynomial of the Jukes-Cantor binary model is a consequence of the flat degenerations of $\SL_2(\C)$ conformal block algebras constructed by Sturmfels and Xu~\cite{SX10}, and the second author~\cite{Manon09}, and is not something typical to group-based models. This hypothesis was further confirmed by the second author in~\cite{Manon12}, where he constructs flat degenerations of $\SL_3(\C)$ conformal block algebras to semigroup algebras associated with $\SL_3(\C)$ BZ triangles on a graph. The Hilbert polynomials of these semigroups depend only on the combinatorial data of the graph, hence $\SL_3(\C)$ BZ triangles on a trivalent graph generalize Theorem~\ref{thm:Buczynska}. These results motivated our investigation of relations between group-based models, BZ triangles and conformal block algebras. In particular, we will show in the next sections that the affine semigroup associated with $\SL_3(\C)$ BZ triangles is closely related to the phylogenetic semigroup associated with $\Z/3\Z$.

\section{Conformal block algebras and group-based models}\label{section:conformal_block_algebras_and_group_based_models}

In this section we start by defining the map from the phylogenetic semigroup $R_{\Gamma}(\Z/m\Z)$ into the semigroup
$S_{\Gamma}(\SL_m(\C))$.

\begin{theorem}\label{theorem:conformal_blocks_group_based_models_inclusion}
Let $\Gamma$ be a graph. Then
\begin{equation}
R_{\Gamma}(\Z/m\Z) \subset S_{\Gamma}(\SL_m(\C)).\\
\end{equation}
\end{theorem}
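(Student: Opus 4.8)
The plan is to construct the inclusion $R_\Gamma(\Z/m\Z)\subset S_\Gamma(\SL_m(\C))$ edge by edge, translating the combinatorial description of the phylogenetic semigroup (Definitions~\ref{symmetric_group_based_model}–\ref{def:phylogenetic_semigroup_on_a_graph}, as repackaged in Lemma~\ref{lemma:group_based_semigroup}) into the combinatorial description of $S_\Gamma(\SL_m(\C))$ coming from the factorization theorem. The crucial observation, already isolated in the excerpt, is the isomorphism $\mathcal W_m/\mathcal R_m\cong\Z/m\Z$ sending $[\omega_i]\mapsto i$ (and $[\omega_0]=[0]\mapsto 0$). This gives a bijection between the elements of $\Z/m\Z$ and the level $1$ alcove $\Delta_1=\{0,\omega_1,\dots,\omega_{m-1}\}$, hence a bijection between labellings of $E(\hat\Gamma)$ by elements of $\Z/m\Z$ and labellings of $E(\hat\Gamma)$ by fundamental weights (together with $0$).

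First I would define the map on generators. A degree $1$ generator of $R_\Gamma(\Z/m\Z)$ is (by Definition~\ref{def:phylogenetic_semigroup_on_a_tree} and the reduction in Lemma~\ref{lemma:group_based_semigroup}) a labelling $a\colon E(\hat\Gamma)\to\Z/m\Z$ such that the labels around each inner vertex sum to $0$ in $\Z/m\Z$ and the two labels on the halves of each split inner edge sum to $0$ in $\Z/m\Z$. I would send such a labelling to the pair $(\omega_a,1)$, where $\omega_a\colon E(\hat\Gamma)\to\Delta_1$ assigns $\omega_{a(e)}$ to the edge $e$ (with $\omega_0:=0$). I then need to check that $(\omega_a,1)$ genuinely lies in $S_\Gamma(\SL_m(\C))$, i.e.\ that the corresponding component of $V_\Gamma(\SL_m(\C))$ is nonzero. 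This is exactly where the level $1$ analysis of Section~\ref{section:conformal_blocks_II} does the work: by the Proposition on genus $0$ triple points, $V_{0,3}(\omega_i,\omega_j,\omega_k,1)$ is nonzero iff $i+j+k\equiv 0\pmod m$, which is precisely condition~(1); and the gluing/duality condition $\eta_{v_i}=\eta_{v_j}^*$ defining $V_\Gamma$ translates, for fundamental weights, into $j_v+k_{v'}\equiv 0\pmod m$ on paired edges, which is precisely condition~(2) (noting $\omega_i^*=\omega_{m-i}$, so the Definition~\ref{def:phylogenetic_semigroup_on_a_graph} condition pairing $x_a^{\overline e}$ with $x_{-a}^{\underline e}$ matches). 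So the image labelling satisfies conditions (1)–(2) from Section~\ref{subsection:level_one_conformal_blocks} iff the source labelling lies in $R_\Gamma(\Z/m\Z)$, and by Theorem~\ref{theorem:dimensions_of_degree_one_components} (and its trivalent-graph proof) the associated conformal block component is then one-dimensional, hence $(\omega_a,1)\in S_\Gamma(\SL_m(\C))$.

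Next I would check that this assignment extends to a semigroup homomorphism and is injective. Both semigroups are graded, with degree given by the last coordinate, and addition is coordinatewise on the weight/label data; the map is clearly additive on the $\Z/m\Z$-coordinates after applying the homomorphism $\Z/m\Z\hookrightarrow$ nothing — here one must be slightly careful, since $\Z/m\Z$ is not a sub-semigroup of the weight lattice. The point is that a degree $k$ element of $R_\Gamma(\Z/m\Z)$ is a sum of $k$ degree $1$ generators, and I send it to the sum of the corresponding $(\omega_{a^{(1)}},1),\dots,(\omega_{a^{(k)}},1)$; I must verify this is well defined, i.e.\ independent of the chosen decomposition into generators, and here I would invoke that $R_\Gamma(\Z/m\Z)$ is, by construction, the semigroup generated by its degree $1$ elements inside the free commutative monoid on the coordinates $x_a^e$, so an element literally records a multiset of generators and the map on the free monoid restricts correctly. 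Injectivity then follows because the weight-datum records, for each edge, the multiset of fundamental weights used, from which the multiset of $\Z/m\Z$-labels — hence the original element of $R_\Gamma(\Z/m\Z)$ — can be recovered.

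The main obstacle I anticipate is the bookkeeping around duality and orientation: matching the condition "$\eta_{v_i}=\eta_{v_j}^*$ on paired points" in the definition of $V_\Gamma(\SL_m(\C))$ with the kernel conditions $\ker(x_a^{\overline e}-x_{-a}^{\underline e})$ in Definitions~\ref{def:phylogenetic_semigroup_on_a_tree}–\ref{def:phylogenetic_semigroup_on_a_graph}, while keeping track of the fact that $\omega_i^*=\omega_{m-i}$ corresponds to $-i\in\Z/m\Z$, and ensuring the genus $g$ loop edges are handled consistently (the paired loop edges in $\hat\Gamma_{g,n}$ carry dual fundamental weights, matching the $\ker(x_a^{\overline{e_i}}-x_{-a}^{\underline{e_i}})$ conditions). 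Once the dictionary between the two sets of conditions is set up cleanly, the proof is essentially a direct translation, with Section~\ref{section:conformal_blocks_II} supplying the nonvanishing of the relevant conformal block components.
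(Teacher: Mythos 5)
Your proposal is correct and follows essentially the same route as the paper: both arguments identify the $\Z/m\Z$-labels with the level~$1$ alcove $\{0,\omega_1,\dots,\omega_{m-1}\}$ via $\mathcal W_m/\mathcal R_m\cong\Z/m\Z$, use the Section~\ref{section:conformal_blocks_II} nonvanishing criterion $i+j+k\equiv 0 \pmod m$ for $V_{0,3}(\omega_i,\omega_j,\omega_k,1)$ to settle the tripod case, and then pass to a general graph via the covering forest $\hat\Gamma$ and the duality conditions $\eta=\eta'^*$ matching $\ker(x_a^{\overline e}-x_{-a}^{\underline e})$ on paired edges. The paper streamlines your well-definedness and injectivity discussion by regarding both semigroups as subsets of the common ambient cone $\Delta^{3|V(\Gamma)|}\times\Z_{\geq 0}$, so the inclusion is a literal containment rather than an abstractly constructed injective homomorphism, but this is a presentational difference only.
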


\begin{proof}
For a tripod, $R_{0,3}(\Z/m\Z) \subset S_{0,3}(\SL_m(\C)) \subset \Delta^{3}\times \Z_{\geq 0}$ is established by results in Section~\ref{subsection:level_one_conformal_blocks}. Here $\Delta$ is a Weyl chamber, considered as a simplicial cone, with extremal rays generated by the unit vectors $e_1, \ldots, e_{m-1}$.  The semigroup $R_{0,3}(\Z/m\Z)$ is the sub-semigroup of this cone generated by elements in $\Delta^{3}\times \{1\}$
where each edge is assigned the generator of such an extremal ray or $e_0=(0,\ldots ,0)$, such that the sum of indices $i$ from the assignment of these generators is $0$ modulo $m$.  Each of these generators corresponds to a unique element of $V_{\Gamma}(\SL_m(\C))$.  The unit vector 
$e_i$ is replaced by the fundamental weight $\omega_i$.  This fundamental weight has image $[i]$ in the quotient group $\Z/m\Z = \mathcal{W}_m/ \mathcal{R}_m.$ 
Similarly, the dual weight $\omega_i^* = \omega_{m-i}$ has image $[m-i]$ in $\Z/m\Z$.  

For a general graph $\Gamma$, we form the covering forest $\hat{\Gamma},$ and note that the following inclusions are immediate. 

\begin{equation}
R_{\hat{\Gamma}}(\Z/m\Z) \subset S_{\hat{\Gamma}}(SL_m(\C)) \subset \Delta^{3|V(\Gamma)|}\times \Z_{\geq 0}\\
\end{equation}

\noindent
For each pair of edges $e, e' \in \hat{\Gamma}$ which are identified by its map to $\Gamma$, there is a subcone $H_{e, e'} \subset \Delta^{3|V(\Gamma)|}\times \Z_{\geq 0}$ of points with $e$ weight dual to $e'$ weight. The theorem now follows from the fact that $R_{\Gamma}(\Z/m\Z) \subset S_{\Gamma}(\SL_m(\C)) \subset \Delta^{|E(\Gamma)|}\times \Z_{\geq 0}$ are obtained from $R_{\hat{\Gamma}}(\Z/m\Z) \subset S_{\hat{\Gamma}}(SL_m(\C)) \subset \Delta^{3|V(\Gamma)|}\times \Z_{\geq 0}$ by intersecting everything in sight with the subcones $H_{e, e'}.$

\end{proof}

\begin{corollary}\label{corollary:saturated_phylogenetic_semigroup_is_included_in_conformal_blocks_semigroup}
 Let $\Gamma$ be a graph. Then
\begin{equation}
\overline{R}_{\Gamma}(\Z/m\Z) \subset S_{\Gamma}(\SL_m(\C)).\\
\end{equation}
\end{corollary}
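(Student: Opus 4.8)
The plan is to combine Theorem~\ref{theorem:conformal_blocks_group_based_models_inclusion} with Proposition~\ref{prop:phylogenetic_semigroup_is_saturation_of_our_semigroup} together with the fact that $S_{\Gamma}(\SL_m(\C))$ is a saturated affine semigroup. First I would recall that $S_{\Gamma}(\SL_m(\C))$ is saturated: it is cut out inside the cone $\Delta^{|E(\Gamma)|}\times\Z_{\geq 0}$ by the dual-weight hyperplane conditions $H_{e,e'}$ (which are linear) and by the level-$\leq L$ alcove inequalities defining which components of $V_{\Gamma}(\SL_m(\C))$ are non-zero; since a component $(\omega,L)$ is non-zero if and only if the underlying tensor-product invariant spaces are non-zero and the $\SL_2(1,m)$-weight bound $\sum (\text{weights}) \leq 2L$ holds at every vertex, and all of these are homogeneous linear inequalities in $(\omega,L)$, the semigroup $S_{\Gamma}(\SL_m(\C))$ is the set of lattice points in a rational polyhedral cone, hence saturated.

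Next, by Theorem~\ref{theorem:conformal_blocks_group_based_models_inclusion} we have the inclusion $R_{\Gamma}(\Z/m\Z) \subset S_{\Gamma}(\SL_m(\C))$. Taking saturations in the ambient lattice (noting that the lattice $L_{\Gamma}^{\gr}(\Z/m\Z)$ embeds compatibly, via $e_i \mapsto \omega_i$, into the character lattice containing $S_{\Gamma}(\SL_m(\C))$), we get $\overline{R}_{\Gamma}(\Z/m\Z) \subset \overline{S}_{\Gamma}(\SL_m(\C))$. Since $S_{\Gamma}(\SL_m(\C))$ is already saturated, $\overline{S}_{\Gamma}(\SL_m(\C)) = S_{\Gamma}(\SL_m(\C))$, and the claimed inclusion follows. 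Concretely: if $u$ lies in $\overline{R}_{\Gamma}(\Z/m\Z)$, then $k\cdot u \in R_{\Gamma}(\Z/m\Z) \subset S_{\Gamma}(\SL_m(\C))$ for some $k \in \Z_{\geq 0}$, and saturation of $S_{\Gamma}(\SL_m(\C))$ gives $u \in S_{\Gamma}(\SL_m(\C))$.

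The main obstacle, and the only point requiring genuine care, is the claim that $S_{\Gamma}(\SL_m(\C))$ is saturated. One must be slightly careful that ``the component defined by $(\omega,L)$ is non-zero'' is indeed a polyhedral condition: this is where Proposition~\ref{ueno} and the factorization Theorem~\ref{factor} do the work, reducing non-vanishing of a component of $V_{\Gamma}(\SL_m(\C))$ to simultaneous non-vanishing of tensor-product invariant spaces $[V(\lambda_v)\otimes V(\eta_v)\otimes V(\mu_v)]^{\SL_m(\C)}$ cut down by the level-$L$ alcove inequalities. The non-vanishing of a Littlewood--Richardson-type invariant space is itself governed by a saturated system (the saturation theorem for $\SL_m$, or more elementarily the fact that the relevant BZ-type polytope has integral vertices), and the alcove and dual-weight conditions are plainly linear in $(\omega,L)$; assembling these observations shows $S_{\Gamma}(\SL_m(\C))$ is the lattice-point set of a rational cone. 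Alternatively, and perhaps most cleanly for this corollary, one may observe that $S_{\Gamma}(\SL_m(\C))$ is the image of $V_{\Gamma}(\SL_m(\C))$ under the weight-and-level map, and since $V_{\Gamma}(\SL_m(\C))$ is a flat degeneration of $V_{C,\vec p}(\SL_m(\C))$, which is the (normal) total coordinate ring of a moduli space of parabolic bundles, the associated semigroup is saturated. With saturation of $S_{\Gamma}(\SL_m(\C))$ in hand, the corollary is formal.
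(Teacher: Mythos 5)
Your formal deduction is exactly the paper's: Theorem~\ref{theorem:conformal_blocks_group_based_models_inclusion} gives $R_{\Gamma}(\Z/m\Z)\subset S_{\Gamma}(\SL_m(\C))$, and once one knows that $S_{\Gamma}(\SL_m(\C))$ is saturated, any $u$ with $k\cdot u\in R_{\Gamma}(\Z/m\Z)$ lies in $S_{\Gamma}(\SL_m(\C))$, which is all the corollary asks. The paper, however, does not attempt to prove saturation of $S_{\Gamma}(\SL_m(\C))$; it cites a deep theorem of Belkale \cite[Section~3.1]{Bel} for precisely this fact. Your attempt to supply a proof of saturation is where the gap lies. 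The claim that ``the component $(\omega,L)$ is non-zero if and only if the underlying tensor-product invariant spaces are non-zero and the $\SL_2(1,m)$-weight bound holds'' is not correct: by Proposition~\ref{ueno} the space $V_{0,3}(\lambda,\eta,\mu,L)$ is the \emph{intersection} $W_L\cap[V(\lambda)\otimes V(\eta)\otimes V(\mu)]^{\SL_m(\C)}$, and an invariant vector can fail to lie in $W_L$ even when all three weights belong to $\Delta_L$. In general the fusion coefficient is strictly smaller than the classical tensor multiplicity, so non-vanishing of the conformal block is not the conjunction of a classical Littlewood--Richardson non-vanishing condition with homogeneous linear inequalities in $(\omega,L)$, and the classical saturation theorem for $\SL_m$ does not directly yield the statement you need. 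That the set of non-vanishing $(\vec\lambda,L)$ is nevertheless the lattice-point set of a rational cone is exactly the content of Belkale's theorem, and it is genuinely harder than the classical case.

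Your alternative route is also not a proof as stated: a flat degeneration need not preserve normality, and even granting normality of $V_{\Gamma}(\SL_m(\C))$, saturation of the image semigroup under the weight-and-level projection $\pi_{\Gamma}$ would require a separate argument (a lattice point of the cone need not be hit by a nonzero element of the algebra). The repair is simply to replace both sketches by the citation of \cite[Section~3.1]{Bel}; with that in place your argument coincides with the paper's.
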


\begin{proof}
A deep result of Belkale \cite[Section~3.1]{Bel} implies that the semigroup $S_{\Gamma}(\SL_m(\C))$ is saturated, so we also get an inclusion of the saturation $\overline{R}_{\Gamma}(\Z/m\Z)$ in $S_{\Gamma}(\SL_m(\C)).$  
\end{proof}

\begin{proof}[Proof of Theorem~\ref{main}]
Corollary~\ref{corollary:saturated_phylogenetic_semigroup_is_included_in_conformal_blocks_semigroup} gives a corresponding inclusion on semigroup algebras,
\begin{equation}\label{equation:algebra_inclusion}
\C[\overline{R}_{\Gamma}(\Z/m\Z)] \subset \C[S_{\Gamma}(\SL_m(\C))].  
\end{equation} 
Theorem~\ref{main} follows from Equation (\ref{equation:algebra_inclusion}) and Corollary~\ref{cor:phylogenetic_model_equals_the_proj_of_the_saturation_of_our_semigroup}.
\end{proof}

\subsection{$V_{\Gamma}(\SL_2(\C))$}

In the case $m = 2,$ the spaces of conformal blocks $V_{0, 3}(\lambda, \mu, \nu, L)$ for a genus $0$, triple marked curve
are known to be multiplicity free. This follows from Proposition \ref{ueno} above, and the fact that invariant spaces 
for triple tensor products of $\SL_2(\C)$ representations are also multiplicity free.  This implies that the algebra $V_{0, 3}(\SL_2(\C))$
is an affine semigroup algebra on the semigroup given by the weight data $\lambda, \mu, \nu, L$ for which the space  $V_{0, 3}(\lambda, \mu, \nu, L)$ is non-zero.  
This semigroup can be shown to be generated by the vectors $(0, 0, 0, 1),$ $(1,1, 0, 1),$ $(1, 0, 1, 1),$
and $(0, 1, 1, 1) \in \R^4,$ where the last coordinate is the level coordinate $L$. The multiplicity-free property also implies that we have an isomorphism
of commutative algebras, 

\begin{equation}\label{eq}
V_{\Gamma}(\SL_2(\C)) \cong \C[S_{\Gamma}(\SL_2(\C))].\\
\end{equation}

\noindent Furthermore, by Example~\ref{example:Jukes_Cantor_binary_model} we know that $R_{0,3}(\Z/2\Z)$ is generated by $(0, 0, 0, 1),$ $(1,1, 0, 1),$ $(1, 0, 1, 1),$
and $(0, 1, 1, 1) \in \R^4$, thus

\begin{equation}
V_{\Gamma}(\SL_2(\C))\cong \C[R_{\Gamma}(\Z/2\Z)].\\
\end{equation}

The semigroup algebra $\C[R_{\Gamma}(\Z/2\Z)]$ is analyzed by Buczy{\'n}ska, Buczy{\'n}ski, the first author, and Micha{\l}ek in \cite{BBKM13},
where they show it is always generated by elements with $L \leq g(\Gamma)+1$.  In \cite{Manon09}, a particular graph $\Gamma_{g, n}$ is 
found for each $g, n$ such that $\C[S_{\Gamma_{g, n}}(\SL_2(\C))]$ is generated by those elements with $L =1, 2.$  In \cite{Manon12} it is also
shown that this degree of generation is strict when $g, n > 1$ or $4 \leq g.$

\subsection{$V_{\Gamma}(\SL_3(\C))$}

The case $m = 3$ is slightly more complicated than the $m = 2$ case in that Equation (\ref{eq}) no longer holds. Unlike in the $m = 2$ case, the algebra of conformal blocks $V_{0, 3}(\SL_3(\C))$ is not a semigroup algebra.  It has the following presentation by its $L=1$ component, see \cite{Manon12},

\begin{align}
&V_{0, 3}(\SL_3(\C)) = \C[X, Y, Z, P_{12}, P_{23}, P_{31}, P_{21}, P_{32}, P_{13}]/I,
\end{align}
\noindent
where 
\begin{equation}
I=< XYZ - P_{12}P_{23}P_{31} + P_{21}P_{32}P_{13}>.
\end{equation}
Note that three binomial relations can be obtained as degenerations of this algebra, $XYZ - P_{12}P_{23}P_{31},$ $XYZ + P_{21}P_{32}P_{13}$, and $P_{12}P_{23}P_{31} - P_{21}P_{32}P_{13}.$ The ideal generated by the union of these three relations cuts out the phylogenetic statistical model $\C[R_{0,3}(\Z/3\Z)].$

The Corollary~\ref{cor:connection_on_trees} implies that there is a surjection 

\begin{equation}
V_{\tree}(\SL_3(\C)) \to \C[S_{\tree}(\SL_3(\C))] \cong \C[R_{\tree}(\Z/3\Z)].\\
\end{equation}

\noindent
Once again, it can be shown that when $\tree$ is trivalent, the toric ideal which cuts out $S_{\tree}(\SL_3(\C))\cong R_{\tree}(\Z/3\Z)$ is generated by a union of the $3^{|V(\tree)|}$ binomial ideals obtained from the degenerations of $V_{\tree}(\SL_3(\C))$ constructed in \cite{Manon12}. We do not know at this time if a similar statement holds for $\tree$ non-trivalent.

Little is known about the semigroups $R_{\Gamma}(\Z/3\Z)$, $S_{\Gamma}(\SL_3(\C))$  or the algebra $V_{\Gamma}(\SL_3(\C))$ outside of the genus $0$ case.  For a particular graph $\Gamma_{1, n}$ it is shown that $V_{\Gamma_{1, n}}(\SL_3(\C))$ is generated by conformal blocks of levels $1, 2, 3$ in \cite{Manon12}. It would be interesting to obtain a general function which bounds the number of necessary generators
for all $\Gamma$ for both, the algebra $V_{\Gamma}(\SL_3(\C))$ and the semigroup $S_{\Gamma}(\SL_3(\C))$.

\section{Berenstein-Zelevinsky triangles and group-based models}\label{section:BZ_triangles_and_group_based_models}

BZ triangles  provide a combinatorial tool for determining dimensions of triple tensor product invariants. There are several graphical interpretations of BZ triangles such as honeycomb diagrams of Knutson and Tao~\cite{KT99}, web diagrams of Gleizer and Postnikov~\cite{GP00}, and an alternate version of honeycombs by the second author and Zhou~\cite{MZ12}. In this section, we will show that there is a natural injection from the degree one elements of phylogenetic semigroups associated with $\Z/m\Z$ to $\SL_m(\C)$ BZ triangles. As a consequence, we obtain a graphical interpretation of the degree one elements of these phylogenetic semigroups.

It would be interesting to find a graphical interpretation of $\overline{R}_{0,3}(\Z/m\Z)$. If $\overline{R}_{0,3}(\Z/m\Z)$ is not normal, then we need a graphical description of higher degree minimal generators of $\overline{R}_{0,3}(\Z/m\Z)$. Even if $\overline{R}_{0,3}(\Z/m\Z)$ is normal, then higher degree elements of $\overline{R}_{0,3}(\Z/m\Z)$ might have several different honeycombs associated with it. This is caused by the fact that there are less algebraic relations between BZ triangles than between elements of phylogenetic semigroups. We hope that the connection between phylogenetic semigroups and BZ triangles will motivate further research on graphical interpretations of phylogenetic semigroups to approach open problems like the Sturmfels-Sullivant conjecture~\cite[Conjecture~29]{SS05}.

\subsection{BZ Triangles}\label{section:BZ_triangles}

The Littlewood-Richardson rule determines the dimension of the space of triple tensor product invariants $c_{\lambda \mu \nu}=\dim(V_{\lambda}\otimes V_{\mu}\otimes V_{\nu})^{\SL_{m}(\mathbb{C})}$. In~\cite{BZ92},
Berenstein and Zelevinsky  showed  that  $c_{\lambda \mu \nu}$ equals the number of integer points in the intersection of a polyhedral cone with an affine subspace. These integer points are called BZ triangles. The second author and Zhou extended this definition to BZ triangles on trivalent graphs~\cite{MZ12}. We will recall the definition of BZ triangles and projections to corresponding highest weights as in~\cite{BZ92}, and then define BZ triangles on graphs similarly to~\cite{MZ12}.

Let $T_m$ be the set of integer points in the triangle with vertices $(2m-3,0,0)$, $(0,2m-3,0)$ and $(0,0,2m-3)$.
Let $H_m$ be the subset of $T_m$ with all coordinates odd and $G_m$ be the subset of $T_m$ with exactly one coordinate odd.
Geometrically, the elements of $G_m$ correspond to the vertices of a graph consisting of hexagons and triangles, and the elements of $H_m$ correspond to the centers of the hexagons, see Figure~\ref{figure:BZ_triangles}.
\begin{figure}[ht]
\centering
\epsfxsize=360pt\epsfbox{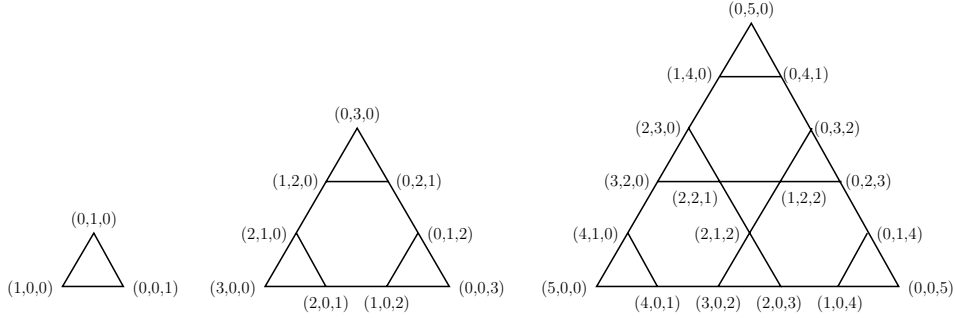} 
\caption{Graphs consisting of hexagons and triangles for $m=2,3,4$}
\label{figure:BZ_triangles}
\end{figure}

Let $L_m$ be the subspace of $ \mathbb{R}^{G_m}$  such that  $x(\xi_1)+x(\xi_2)=x(\xi'_1)+x(\xi'_2)$ for all opposite pairs of edges $[\xi_1,\xi_2]$ and $[\xi'_1,\xi'_2]$ of all hexagons. Define a polyhedral cone $K_m=L_m\cap\mathbb{R}^{G_m}_{\geq 0}$ and an affine semigroup $\BZ(\SL_m(\C))=K_m\cap\Z^{G_m}$. 

\begin{definition}
Elements of the affine semigroup $\BZ(\SL_m(\C))$ are called \textit{BZ triangles}. 
\end{definition}

\begin{example}
Minimal generating sets for $\BZ(\SL_2(\C))$ and $\BZ(\SL_3(\C))$ are listed in Figures~\ref{figure:BZ_rank_one} and~\ref{figure:BZ_rank_two}.
\begin{figure}[ht]
\centering
\epsfxsize=240pt\epsfbox{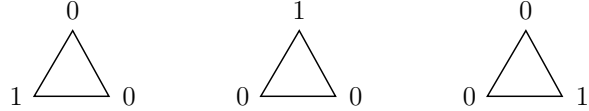}
\caption{Minimal generating set for $\BZ(\SL_2(\C))$}
\label{figure:BZ_rank_one} 
\end{figure}
\begin{figure}[ht]
\centering
\epsfxsize=240pt\epsfbox{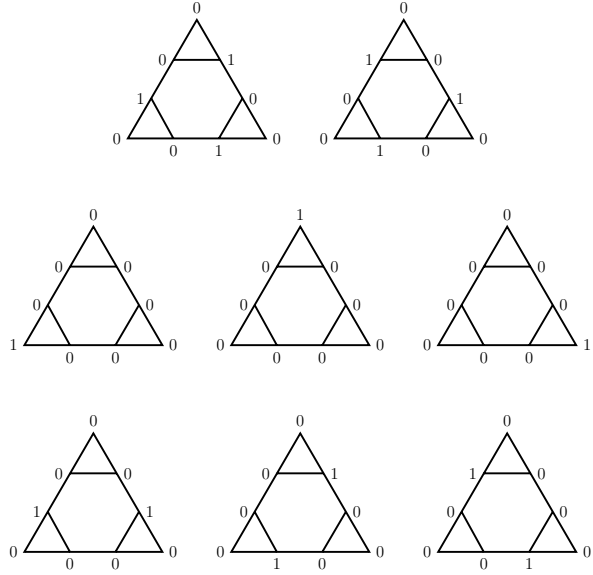}
\caption{Minimal generating set for $\BZ(\SL_3(\C))$}
\label{figure:BZ_rank_two} 
\end{figure}
\end{example}

Honeycombs provide a graphical interpretation of BZ triangles. We use the version of honeycombs defined by the second author and Zhou~\cite[Section 2]{MZ12}. To construct the honeycomb associated with a BZ triangle, we replace each label $x(\xi)$ at a vertex $\xi$ of a hexagon by an edge weighted by $x(\xi)$ from the vertex of the hexagon to the center of the hexagon. A label $x(\xi)$ at the vertex $\xi$ of the underlying triangle is replaced by a vertex weighted by $x(\xi)$. An example of a BZ triangle and the corresponding honeycomb is depicted in Figure~\ref{figure:honeycombs}.

\begin{figure}[ht]
\centering
\epsfxsize=160pt\epsfbox{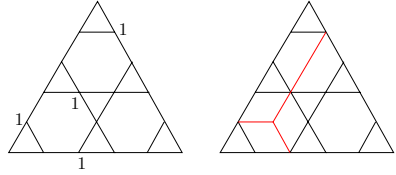}
\caption{A BZ triangle and the corresponding honeycomb}
\label{figure:honeycombs} 
\end{figure}

We identify the lattice of $\SL_m(\C)$-weights with the standard lattice $\Z^{m-1}$ using as the standard basis the fundamental weights $\omega _1=(1,0,0,\ldots ,0)$, $\omega_2=(1,1,0,\ldots ,0),$ $\ldots ,$ $\omega _{m-1}=(1,1,1,\ldots ,1)$. Define a linear projection pr:$L_m\to \mathbb{R}^{3(m-1)}$ to the vector space of triples of $\SL_m(\C)$-weights by 
\begin{equation}
\pr(x)=(\lambda_1,\ldots,\lambda_{m-1};\mu_1,\ldots,\mu_{m-1};\nu_1,\ldots,\nu_{m-1}),
\end{equation}
where
\begin{align*}
\lambda_i &=x(2(m-i)-1,2(i-1),0)+x(2(m-i)-2,2i-1,0),\\
\mu_i &=x(0,2(m-i)-1,2(i-1))+x(0,2(m-i)-2,2i-1),\\
\nu_i &=x(2(i-1),0,2(m-i)-1)+x(2i-1,0,2(m-i)-2)
\end{align*}
for $i\in{1,\ldots,m-1}$. The coordinates of pr$(x)$ are pairwise sums of neighboring labels on the boundary of the underlying triangle starting from the lower left corner and going clockwise around the triangle. In particular, the coordinates $\lambda_i$ correspond to the sums of labels on the northwest edge, $\mu_i$ on the northeast edge, and $\nu_i$ on the south edge of the underlying triangle. 

\begin{example}\label{example:minimal_generators_of_BZ}
The projections of minimal generators of $\BZ(\SL_2(\C))$ as in Figure~\ref{figure:BZ_rank_one} are
\begin{equation}
(1;0;1),(1;1;0),(0;1;1). 
\end{equation}
The projections of minimal generators of $\BZ(\SL_3(\C))$ as in Figure~\ref{figure:BZ_rank_two} are
\begin{align*}
&(1,0;1,0;1,0), (0,1;0,1;0,1),\\
&(1,0;0,0;0,1), (0,1;1,0;0,0), (0,0;0,1;1,0),\\
&(1,0;0,1;0,0), (0,0;1,0;0,1), (0,1;0,0;1,0). 
\end{align*}
\end{example}

\begin{theorem}[\cite{BZ92}, Theorem~1]\label{thm:BZ}
Let $\lambda=\sum \lambda_i\omega _i$, $\mu=\sum \mu_i \omega _i$ and $\nu =\sum \nu_i \omega _i$ be the three highest $\SL_m(\C)$-weights. Then the triple multiplicity $c_{\lambda \mu \nu}$ equals $|\BZ(\SL_m(\C))\cap \pr^{-1}(\lambda,\mu,\nu)|$.
\end{theorem}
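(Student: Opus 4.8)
\textbf{Proof proposal for Theorem~\ref{thm:BZ}.}

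The plan is to reduce the statement to the classical Berenstein--Zelevinsky parametrization of Littlewood--Richardson coefficients by exhibiting an explicit bijection between the integer points of $\BZ(\SL_m(\C)) \cap \pr^{-1}(\lambda,\mu,\nu)$ and the BZ triangles of \cite{BZ92} with boundary data $(\lambda,\mu,\nu)$. First I would unwind the two combinatorial descriptions: on one side, our definition presents a BZ triangle as a nonnegative integer function $x$ on the vertex set $G_m$ of the hexagon-triangle graph subject to the ``opposite edges of a hexagon sum equally'' linear relations $L_m$; on the other side, the original definition in \cite{BZ92} presents a BZ triangle as an integer-valued array on the triangular grid satisfying the hexagon (``$R$-condition'') equalities, with the three tuples of highest-weight coordinates recovered as the pairwise boundary sums. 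The identification of $H_m$ with hexagon centers and $G_m$ with hexagon vertices is precisely designed so that our linear relations on $K_m$ translate term-by-term into the hexagon conditions of \cite{BZ92}; I would verify this dictionary on the level of a single hexagon and then note it propagates globally because every interior relation of the grid is of this local type.

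The second step is to check that the projection $\pr$ defined above computes exactly the boundary highest-weight data in the sense of \cite[Theorem~1]{BZ92}. Here one must match conventions carefully: the coordinates $\lambda_i, \mu_i, \nu_i$ are defined as sums of two neighboring boundary labels, one from a hexagon vertex lying on the boundary and one from an adjacent triangle vertex, read clockwise from the lower-left corner. I would confirm that this is the same rule (possibly after the identification $\omega_i \leftrightarrow$ $i$-th fundamental weight and up to the standard reindexing $\nu \leftrightarrow \nu^*$ that appears throughout the paper when passing between ``triple tensor product invariant'' and ``Littlewood--Richardson coefficient'' conventions) as the boundary-reading rule in \cite{BZ92}. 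Once the dictionary and the projection agree, the fiber $\BZ(\SL_m(\C)) \cap \pr^{-1}(\lambda,\mu,\nu)$ is in bijection with the set whose cardinality \cite[Theorem~1]{BZ92} asserts to be $c_{\lambda\mu\nu}$, and we are done by direct appeal to that theorem.

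The main obstacle I anticipate is bookkeeping rather than mathematics: getting the coordinate conventions exactly right, in particular the labeling of $T_m$ by barycentric-style triples summing to $2m-3$, the clockwise-versus-counterclockwise reading of the three edges, and the possible appearance of a dual weight $\nu^*$ versus $\nu$ depending on whether one works with $(V_\lambda \otimes V_\mu \otimes V_\nu)^{\SL_m(\C)}$ or with $c_{\lambda\mu}^{\nu}$. A secondary point to handle with care is the boundary of the triangle itself, where hexagons degenerate to triangles and one must check that the ``exactly one coordinate odd'' condition defining $G_m$ together with the relations still restricts correctly to give genuine boundary labels; this is where the hexagon-triangle picture of Figure~\ref{figure:BZ_triangles} does the real work, and I would include a small explicit example (e.g. $m=3$, matching Example~\ref{example:minimal_generators_of_BZ}) to pin down all signs and orientations before asserting the general bijection.
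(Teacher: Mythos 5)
This statement is quoted verbatim from the literature --- the paper offers no proof of its own beyond the citation to \cite{BZ92}, and the definitions in Section~\ref{section:BZ_triangles} (the sets $T_m$, $G_m$, $H_m$, the subspace $L_m$, and the projection $\pr$) are set up precisely so that the identification with the original Berenstein--Zelevinsky formulation is immediate. Your plan --- match the two combinatorial descriptions hexagon by hexagon, check that $\pr$ reads off the boundary weights in the convention of \cite{BZ92}, and then invoke their Theorem~1 --- is exactly the intended justification, so your approach coincides with the paper's.
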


Define a linear projection pr$_e:L_m\to \mathbb{R}^{m-1}$ to the vector space of $\SL_m(\C)$-weights for each edge $e$ of the underlying triangle by
\begin{equation}
\pr_e(x)=\left\{ 
\begin{array}{ll}
(\lambda_1,\ldots ,\lambda_{m-1}) & \textrm{if } e \textrm{ is the northwest edge of the triangle},\\
(\mu_1,\ldots ,\mu_{m-1}) & \textrm{if } e \textrm{ is the northeast edge of the triangle},\\
(\nu_1,\ldots ,\nu_{m-1}) & \textrm{if } e \textrm{ is the south edge of the triangle}.
\end{array}\right. 
\end{equation}
The projection pr$_e$ maps a BZ triangle to the corresponding $\lambda, \mu$ or $\nu$. Let $\pr_e^*: L_m\to \mathbb{R}^{m-1}$  be the projection that maps a BZ triangle to the dual $\lambda^*, \mu^*$ or $\nu^*$ of the corresponding weight vector $\lambda, \mu$ or $\nu$, i.e. $\pr_e^*$ is obtained from $\pr_e$ by reversing the order of coordinates.

\begin{figure}[ht]
\centering
\epsfxsize=100pt\epsfbox{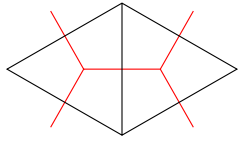}
\caption{The trivalent $4$-leaf tree and the triangle complex dual to it}
\label{figure:triangle_complex}
\end{figure}

Given any trivalent graph $\Gamma$ consider the triangle complex $C$ dual to $\Gamma$, as illustrated in Figure~\ref{figure:triangle_complex}. Let $v$ be an inner vertex of $\Gamma$ and $\Delta_v$ the triangle dual to $v$. Assign a copy of $\SL_m(\C)$ BZ triangles to $v$, denoted by $\BZ(\SL_m(\C))^{(v)}$, such that its underlying triangle is $\Delta_v$. For every edge $e$ adjacent to $v$ define $\pr_e:\BZ(\SL_m(\C))^{(v)}\to \Z^{m-1}$ to be equal to $\pr_{e^*}:\BZ(\SL_m(\C))^{(v)}\to \Z^m$ where $e^*\in C$ is the edge dual to $e$. All projections are taken in the clockwise direction of the corresponding triangle.

\begin{definition}
Let $\Gamma$ be a trivalent graph. The affine semigroup of $\SL_m(\C)$ BZ triangles on $\Gamma$ is
\begin{displaymath}
\BZ_{\Gamma}(\SL_m(\C))=\prod_{v\in V(\Gamma)}\BZ(\SL_m(\C))^{(v)}\cap\bigcap _{e=(v_1,v_2)} \ker (\pr_e(x^{(v_1)})-\pr_e^*(x^{(v_2)})).
\end{displaymath}
If $\Gamma$ is the tripod, then $\BZ_{\Gamma}(\SL_m(\C))=\BZ(\SL_m(\C))$. 
\end{definition}

\begin{example}
Let $\tree$ be the trivalent 4-leaf tree and $C$ the triangle complex dual to $\tree$ as in Figure~\ref{figure:triangle_complex}. A $\SL_3(\C)$ BZ triangle on $\tree$ is shown in Figure~\ref{figure:BZ_triangle_on_tree}. 
\begin{figure}[ht]
\centering
\epsfxsize=100pt\epsfbox{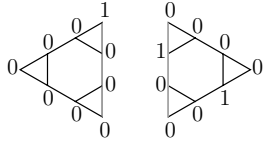}
\caption{BZ triangle on the trivalent 4-leaf tree}
\label{figure:BZ_triangle_on_tree}
\end{figure} 
\end{example}

Similarly to the tripod case, one can construct a honeycomb to any BZ triangle on a trivalent graph, see~\cite[Figure 4]{MZ12} for an example of this construction.

\subsection{Connection Between Group-Based Models and BZ Triangles}\label{section:connection_group-based_models_BZ_triangles}

\begin{lemma} 
Denote the edges of the tripod by $e_1,e_2,e_3$.
The polytope $P_{0,3}(\Z/m\Z)$ has $m^2$ vertices. They are
\begin{itemize}
\item the zero vertex,

\item $x^{e_1}_{[i]}=x^{e_2}_{[m-i]}=1$ for $i\in\{1,\ldots, m-1\}$ and all other coordinates zero,

\item $x^{e_2}_{[i]}=x^{e_3}_{[m-i]}=1$ for $i\in\{1,\ldots, m-1\}$ and all other coordinates zero,

\item $x^{e_1}_{[i]}=x^{e_3}_{[m-i]}=1$ for $i\in\{1,\ldots, m-1\}$ and all other coordinates zero,

\item $x^{e_1}_{[i]}=x^{e_2}_{[j]}=x^{e_3}_{[k]}=1$ for $i,j,k\in\{1,\ldots ,m-1\}$ with $i+j+k = 0$ modulo $m$ and all other coordinates zero.
\end{itemize}
\end{lemma}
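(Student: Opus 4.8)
The plan is to describe $P_{0,3}(\Z/m\Z)$ explicitly from Definition~\ref{symmetric_group_based_model} and then verify that the listed points are exactly its vertices. Recall that after projecting away the coordinates indexed by $0\in\Z/m\Z$, a point of $P_{0,3}(\Z/m\Z)$ records, for each of the three edges $e_1,e_2,e_3$ of the tripod, a probability distribution on $\{[1],\ldots,[m-1]\}$ (the "mass at $[0]$" being the deficit $1-\sum_{a\neq 0}x^e_a$). The polytope is the convex hull of the vertices coming from the deterministic labellings, i.e. those where each edge is assigned a single unit vector $e_a$ with $a\in\Z/m\Z$, subject to condition (1): the three chosen indices sum to $0$ modulo $m$. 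So the first step is simply to enumerate the triples $([i],[j],[k])$ with $i+j+k\equiv 0\pmod m$ and $i,j,k\in\{0,\ldots,m-1\}$, and translate each into its coordinate vector after dropping the $0$-coordinates.

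Next I would carry out the enumeration case-by-case according to how many of $i,j,k$ equal $0$. If all three are $0$, we get the zero vertex. If exactly one is $0$ — say $k=0$ — then $i+j\equiv 0$ forces $j=m-i$ with $i\in\{1,\ldots,m-1\}$, giving the vertex with $x^{e_1}_{[i]}=x^{e_2}_{[m-i]}=1$; the two other choices of which edge carries the $0$ give the second and third families, for a total of $3(m-1)$ such vertices. It is impossible for exactly two of $i,j,k$ to be $0$, since then the third would also have to be $0$. If none of $i,j,k$ is $0$, we get the last family: all triples $(i,j,k)\in\{1,\ldots,m-1\}^3$ with $i+j+k\equiv 0\pmod m$. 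Counting: the number of such triples is the number of solutions in $\{1,\ldots,m-1\}^3$, which by inclusion–exclusion on the coordinates that are allowed to be $0$ equals $m^2 - 3(m-1) - 1$ (total solutions in $\{0,\ldots,m-1\}^3$ is $m^2$, subtract the $3(m-1)$ with exactly one zero and the $1$ with all zero). Adding back the $3(m-1)$ and the $1$ we have already listed, the grand total is exactly $m^2$, which matches the claim.

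It remains to check that each of these points is actually a \emph{vertex} of $P_{0,3}(\Z/m\Z)$ and not merely a point in it. This is the one genuinely substantive step. The cleanest argument is that $P_{0,3}(\Z/m\Z)$ is by construction the convex hull of these finitely many $0/1$ lattice points, and each of them is a vertex of the cube $[0,1]^{3(m-1)}$ (a $0/1$ vector); since $P_{0,3}(\Z/m\Z)$ is contained in that cube and each listed point lies in $P_{0,3}(\Z/m\Z)$, a point that is a vertex of the ambient cube and belongs to a subpolytope is automatically a vertex of the subpolytope (it cannot be a proper convex combination of other points of the subpolytope because it is extreme in the cube). Hence every listed point is a vertex, no listed point is a convex combination of the others, and the list is complete. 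One should also note that distinct triples give distinct coordinate vectors — here the only subtlety is confirming that dropping the $0$-coordinates does not identify two of the non-zero triples, which is immediate since the surviving coordinates record the indices $[i],[j],[k]$ faithfully when those indices are nonzero, and the vertices with some index $0$ have strictly fewer nonzero coordinates.

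I expect the main obstacle to be purely bookkeeping: getting the combinatorial count $1 + 3(m-1) + (m^2 - 3(m-1) - 1) = m^2$ to line up, and being careful that the projection forgetting the $0$-coordinates genuinely keeps all $m^2$ points distinct and extreme. There is no deep ingredient; the content is entirely in organizing the modular condition $i+j+k\equiv 0$ by the support pattern of $(i,j,k)$.
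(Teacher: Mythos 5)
Your proof is correct and follows essentially the same route as the paper's, which simply observes that the labels on $e_1$ and $e_2$ can be chosen freely ($m^2$ choices) and determine the label on $e_3$, so that the $m^2$ resulting labellings are exactly those listed. Your extra case analysis by the number of zero indices and your verification that the listed $0/1$ points are genuinely extreme (as vertices of the ambient cube lying in the subpolytope) make the argument more careful than the paper's one-line count, but do not change its substance.
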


\begin{proof}
We can freely choose a label $[i]$ on the edge $e_1$ and a label $[j]$ on the edge $e_2$. The label  $[k]$ on the edge $e_3$ is determined by the first two. The additive group $\Z/m\Z$ has $m$ elements, hence there are exactly $m^2$ possibilities to label the edges of the tripod. All such possibilities are listed above.
\end{proof}

\begin{example}\label{example:correspondence_for_r_equal_to_one_and_two}
The vertices of $P_{0,3}(\Z/2\Z)$ and $P_{0,3}(\Z/3\Z)$ are depicted in Figures~\ref{figure:Z2_vertices} and~\ref{figure:Z3_vertices}. Without the zero vertex, they are in one-to-one correspondence with the minimal generators of the semigroup of $\SL_2(\C)$ and $\SL_3(\C)$ BZ triangles as in Example~\ref{example:minimal_generators_of_BZ}. We will prove in Theorem~\ref{theorem:BZ_triangles_group_based_models_inclusion} that there is a similar connection between  $\SL_m(\C)$ BZ triangles and the group-based model with the underlying group $\Z/m\Z$.
\begin{figure}[ht]
\centering
\epsfxsize=240pt\epsfbox{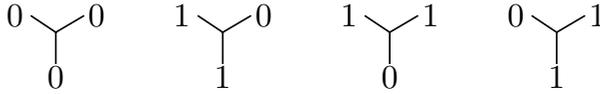}
\caption{Labellings of the tripod with elements of $\Z/2\Z$}
\label{figure:Z2_vertices}
\end{figure}

\begin{figure}[ht]
\centering
\epsfxsize=180pt\epsfbox{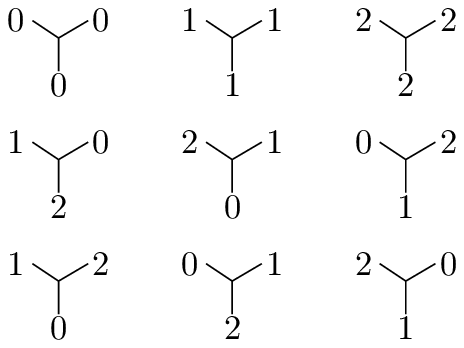}
\caption{Labellings of the tripod with elements of $\Z/3\Z$}
\label{figure:Z3_vertices}
\end{figure}
\end{example}

In the rest of this section we consider the semigroup associated with a group-based model without the grading induced by the last coordinate of $R_{\Gamma}(\Z/m\Z)$. We denote this semigroup by $R^{\pr}_{\Gamma}(\Z/m\Z)$.

\begin{theorem}\label{theorem:BZ_triangles_group_based_models_inclusion}
For $m\in \N_{\geq 2}$
\begin{equation}
R^{\pr}_{0,3}(\Z/m\Z) \subset \pr(\BZ(\SL_m(\C))).
\end{equation}
For $m\in \{2,3\}$ the inclusion is equality.  
\end{theorem}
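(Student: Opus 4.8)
The plan is to work entirely with the projections $\pr$ and $\pr_e$ defined on the cone $L_m$, and to match the defining conditions of the two affine semigroups term by term. Recall that $\pr(x) = (\lambda;\mu;\nu)$ where each coordinate $\lambda_i,\mu_i,\nu_i$ is a pairwise sum of two adjacent boundary labels of the underlying triangle. First I would verify the inclusion $R^{\pr}_{0,3}(\Z/m\Z) \subset \pr(\BZ(\SL_m(\C)))$ on generators. By the preceding lemma, the nonzero vertices of $P_{0,3}(\Z/m\Z)$ are of four types: the ``edge--edge'' labellings $x^{e_1}_{[i]}=x^{e_2}_{[m-i]}=1$ (and its two cyclic analogues) and the ``genuine tripod'' labellings $x^{e_1}_{[i]}=x^{e_2}_{[j]}=x^{e_3}_{[k]}=1$ with $i+j+k\equiv 0 \pmod m$. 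Under the Fourier/weight dictionary of Theorem~\ref{theorem:conformal_blocks_group_based_models_inclusion}, the label $[i]$ on an edge corresponds to the fundamental weight $\omega_i$, and the condition $i+j+k\equiv 0 \pmod m$ is exactly the condition $c_{\omega_i\omega_j\omega_k}\neq 0$ by the proposition on invariants in $V(\omega_i)\otimes V(\omega_j)\otimes V(\omega_k)$. By Theorem~\ref{thm:BZ}, $c_{\omega_i\omega_j\omega_k}=1$ means there is a (unique) BZ triangle $x$ with $\pr(x)=(\omega_i,\omega_j,\omega_k)$; mapping each generator of $R^{\pr}_{0,3}(\Z/m\Z)$ to this BZ triangle, and noting that $\pr$ is linear and additive on the monoid structure, gives the containment of semigroups.

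For the reverse inclusion when $m\in\{2,3\}$, I would argue that $\pr(\BZ(\SL_m(\C)))$ is generated by the images of the minimal generators of $\BZ(\SL_m(\C))$, which are listed explicitly (via Example~\ref{example:minimal_generators_of_BZ}) as exactly the triples $(\omega_i;\omega_j;\omega_k)$ with $i+j+k\equiv 0\pmod m$, the edge--edge triples, and $0$. Each of these is by the lemma the projection of a vertex of $P_{0,3}(\Z/m\Z)$, hence lies in $R^{\pr}_{0,3}(\Z/m\Z)$; linearity then yields $\pr(\BZ(\SL_m(\C)))\subseteq R^{\pr}_{0,3}(\Z/m\Z)$. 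The one subtlety to handle carefully is that $\pr(\BZ(\SL_m(\C)))$ a priori need only be the \emph{semigroup} generated by $\{\pr(g): g \text{ generates } \BZ(\SL_m(\C))\}$ together with the images of all other elements; but since $\pr$ is a linear map of affine semigroups, it carries a generating set to a generating set, so this is immediate once the generators are identified.

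The main obstacle — and the only place a genuinely nontrivial fact is used — is the identification of the minimal generating set of $\BZ(\SL_m(\C))$ for $m=2,3$. For $m=2$ and $m=3$ this is a finite computation, recorded in Figures~\ref{figure:BZ_rank_one} and~\ref{figure:BZ_rank_two}, and the projections are tabulated in Example~\ref{example:minimal_generators_of_BZ}; one then observes by inspection that these eight (resp.\ three) triples, modulo the obvious symmetry, coincide with the nonzero vertices of $P_{0,3}(\Z/3\Z)$ (resp.\ $P_{0,3}(\Z/2\Z)$) under the $\omega_i \leftrightarrow [i]$ dictionary. I would also remark why one should \emph{not} expect equality for larger $m$: the fibers $\pr^{-1}(\omega_i,\omega_j,\omega_k)$ remain singletons (the level-$1$ multiplicities are still $0$ or $1$), so $\pr(\BZ(\SL_m(\C)))$ still contains all the degree-one generators of $R^{\pr}_{0,3}(\Z/m\Z)$; the failure of equality comes from BZ triangles of higher ``size'' whose boundary data $(\lambda,\mu,\nu)$ are still in the image of $\pr$ but whose multiplicities $c_{\lambda\mu\nu}$ exceed what the phylogenetic semigroup records — i.e.\ $\pr(\BZ(\SL_m(\C)))$ is typically strictly larger than $R^{\pr}_{0,3}(\Z/m\Z)$ as a \emph{graded} object, though here we only claim the ungraded containment, so no contradiction arises. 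This last point is worth stating but is not needed for the proof itself.
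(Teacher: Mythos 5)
Your argument is correct, but for the forward inclusion it takes a different route from the paper. You deduce the existence of a BZ triangle over each nonzero vertex of $P_{0,3}(\Z/m\Z)$ abstractly: the vertex corresponds to a triple of weights in $\{0,\omega_1,\dots,\omega_{m-1}\}$ with index sum divisible by $m$, the tensor-product proposition of Section~\ref{section:conformal_blocks_II} gives $c_{\omega_i\omega_j\omega_k}=1$, and Theorem~\ref{thm:BZ} converts this into a (unique) BZ triangle in the fiber of $\pr$. The paper explicitly acknowledges that the inclusion can be obtained this way (it remarks that it follows from Proposition~\ref{ueno} together with Theorem~\ref{theorem:conformal_blocks_group_based_models_inclusion}), but deliberately gives instead a purely combinatorial proof that \emph{exhibits} the BZ triangle for each type of vertex --- $1$'s along a line segment parallel to an edge for the ``edge--edge'' vertices, and $1$'s along three segments emanating from a hexagon center for the genuine tripod vertices --- since the whole point of the section is to provide a graphical calculus for the phylogenetic semigroup. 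Your approach is shorter and leans on the representation theory already established; the paper's buys explicit honeycomb pictures. For the equality when $m\in\{2,3\}$ the two arguments coincide: both read off the minimal generators from Figures~\ref{figure:BZ_rank_one} and~\ref{figure:BZ_rank_two} and Example~\ref{example:minimal_generators_of_BZ}, and use that a semigroup homomorphism carries a generating set onto a generating set of the image. One small caveat: your closing aside about why equality fails for $m>3$ is not quite the right mechanism --- the obstruction exhibited in the paper is a BZ triangle whose boundary weights (e.g.\ $(\omega_1+\omega_{m-1};\omega_2;\omega_{m-2})$ for $\SL_4(\C)$) are simply not a nonnegative integer combination of the polytope's vertices, rather than an issue of multiplicities exceeding one --- but since you flag that remark as inessential, it does not affect the proof.
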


By Proposition~\ref{ueno}, the set $\pr(\BZ(\SL_m(\C)))$ and the projection of $S_{0,3}(\SL_m(\C))$ that forgets the level are equal. Thus, the inclusion in Theorem~\ref{theorem:BZ_triangles_group_based_models_inclusion} follows from Theorem~\ref{theorem:conformal_blocks_group_based_models_inclusion}. Here we will give a combinatorial proof with the aim to provide a graphical tool for studying group-based models associated with groups $\Z/m\Z$.

\begin{proof}
We will prove that every vertex of $P_{0,3}(\Z/m\Z)\subset \R^{3(m-1)}$ corresponds to an element of $\pr(\BZ(\SL_m(\C)))\subset \R^{3(m-1)}$ (the unit vectors corresponding to the edges $e_1,e_2,e_3$ are replaced by the fundamental weights $\lambda,\mu,\nu$, respectively). This implies $ R^{\pr}_{0,3}(\Z/m\Z) \subseteq \pr(\BZ(\SL_m(\C)))$, since $R^{\pr}_{0,3}(\Z/m\Z)$ is generated by the vertices of $P_{0,3}(\Z/m\Z)$. 

\begin{itemize}
\item The zero vertex of $P_{0,3}(\Z/m\Z)$ equals the projection of the zero BZ triangle.

\item The vertex of $P_{0,3}(\Z/m\Z)$ with $x^{e_2}_{[i]}=x^{e_3}_{[m-i]}=1$ for fixed $i\in\{1,\ldots,m-1\}$  and all other coordinates $0$ is the projection of a BZ triangle with $1$'s at the vertices that lie on a line segment parallel to the northwest edge, and $0$'s on all the other vertices. Figure~\ref{figure:BZ} depicts honeycombs corresponding to all such BZ triangles for $\SL_4(\C)$.

\begin{figure}[ht]
\centering
\epsfxsize=240pt\epsfbox{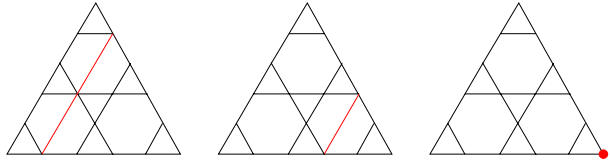}
\caption{BZ triangles with $1$'s at the vertices that lie on a line segment parallel to the northwest edge}
\label{figure:BZ}
\end{figure}

\item The vertex of $P_{0,3}(\Z/m\Z)$ with $x^{e_1}_{[i]}=x^{e_3}_{[m-i]}=1$ and all other coordinates $0$, and the vertex of $P_{0,3}(\Z/m\Z)$ with $x^{e_1}_{[i]}=x^{e_2}_{[m-i]}=1$  and all other coordinates $0$ are projections of BZ triangles with $1$'s at the vertices that lie on line segments parallel to the northeast and south edge, respectively.

\item The vertex of $P_{0,3}(\Z/m\Z)$ with $x^{e_1}_{[i]}=x^{e_2}_{[j]}=x^{e_3}_{[k]}=1$ for fixed $i,j,k\in \{1,\ldots ,m-1\}$ with $i+j+k=m$ and all other coordinates $0$ equals the projection of a BZ triangle with $1$'s at the vertices that lie on the three line segments that start at the middle point of a hexagon and go west, northeast and southeast. Figure~\ref{figure:BZ2} depicts honeycombs corresponding to all such BZ triangles for $\SL_4(\C)$.

\begin{figure}[ht]
\centering
\epsfxsize=240pt\epsfbox{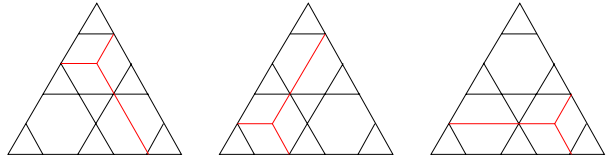}
\caption{BZ triangles with $1$'s at the vertices that lie on the three line segments that start at the middle point of a hexagon and go west, northeast and southeast}
\label{figure:BZ2}
\end{figure}

\item The vertex of $P_{0,3}(\Z/m\Z)$ with $x^{e_1}_{[i]}=x^{e_2}_{[j]}=x^{e_3}_{[k]}=1$ for fixed $i,j,k\in \{1,\ldots ,m-1\}$ with $i+j+k=2m$ and all other coordinates $0$ equals the projection of a BZ triangle with $1$'s at the vertices that lie on the three line segments that start at the middle point of a hexagon and go east, southwest and northwest. Figure~\ref{figure:BZ3} depicts honeycombs corresponding to all such BZ triangles for $\SL_4(\C)$. 
\end{itemize}

\begin{figure}[ht]
\centering
\epsfxsize=240pt\epsfbox{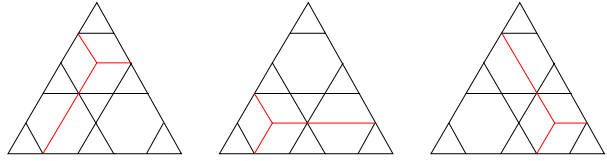}
\caption{BZ triangles with $1$'s on the vertices that lie on three line segments that start at the middle point of a hexagon and go east, southwest and northwest}
\label{figure:BZ3}
\end{figure}

Example~\ref{example:correspondence_for_r_equal_to_one_and_two} proves the equality for $m\in\{2,3\}$.
\end{proof}

The semigroups $\BZ(SL_2(\C))$ and $R^{\pr}_{0,3}(\Z/2\Z)$ are isomorphic.  The semigroups $\BZ(SL_3(\C))$ and $R^{\pr}_{0,3}(\Z/3\Z)$ are not isomorphic (but we have $\pr(\BZ(SL_3(\C)))=R^{\pr}_{0,3}(\Z/3\Z)$). The toric ideal corresponding to the semigroup $\BZ(SL_3(\C))$ is generated by one cubic polynomial and the toric ideal corresponding to $R^{pr}_{0,3}(\Z/3\Z)$ is generated by two cubic polynomials. For $m>3$, the inclusion in Theorem~\ref{theorem:BZ_triangles_group_based_models_inclusion} is strict. The $\SL_m(\C)$ BZ triangle given by 
\begin{align*}
&x(2m-3,0,2)=x(2m-2,1,0)=x(2m-4,2,1)=x(2m-6,4,1)=\ldots\\
&=x(4,2m-6,1)=x(2,2m-4,1)=x(1,2m-2,0)=x(0,2m-3,2)=1
\end{align*}
and all other coordinates $0$, see Figure~\ref{figure:BZNotPhyl} for the $\SL_4(\C)$ case,
has projection 
\begin{equation}
(1,0,\ldots,0,1;0,1,0,\ldots 0;0,\ldots 0,1,0),
\end{equation}
which does not belong to $R^{\pr}_{0,3}(\Z/m\Z)$.

\begin{figure}[ht]
\centering
\epsfxsize=80pt\epsfbox{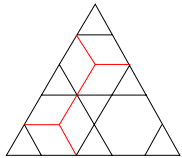}
\caption{A $\SL_4(\C)$ BZ triangle whose projection does not belong to  $R_{0,3}^{\pr}(\Z/ 4\Z)$}
\label{figure:BZNotPhyl}
\end{figure}

Let $\Gamma$ be a  trivalent graph. Define $\pr_{\Gamma}$ as the direct product of functions $\pr$ with one copy for each $v\in V(\Gamma)$.

\begin{corollary}\label{cor:connection_on_trees}
Let $\Gamma$ be a trivalent graph. For $m\in \N_{\geq 2}$
\begin{equation}
R^{\pr}_{\Gamma}(\Z/m\Z) \subset \pr_{\Gamma}(\BZ_{\Gamma}(\SL_m(\C))).
\end{equation}
Furthermore, for $m\in \{2,3\}$ the equality holds. 
\end{corollary}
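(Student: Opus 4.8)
The plan is to reduce the statement to the tripod case, Theorem~\ref{theorem:BZ_triangles_group_based_models_inclusion}, exploiting that both sides are assembled from tripod-level pieces by imposing matching gluing relations along the inner edges of $\Gamma$. On the group-based side this is Lemma~\ref{lemma:group_based_semigroup}: $R_\Gamma(\Z/m\Z)$ is the intersection of $\prod_{v\in V(\Gamma)}R_{0,3}(\Z/m\Z)$ with the hyperplanes $\{x_a^{\overline e}=x_{-a}^{\underline e}\}$, one for each inner edge $e$ and each $a\in\Z/m\Z$; passing to $R^{\pr}_\Gamma$ merely forgets the grading. On the BZ side, the definition of $\BZ_\Gamma(\SL_m(\C))$ realizes it as the intersection of $\prod_{v\in V(\Gamma)}\BZ(\SL_m(\C))^{(v)}$ with the equations $\pr_e(x^{(v_1)})=\pr_e^*(x^{(v_2)})$, one for each inner edge $e=(v_1,v_2)$. (Alternatively one could route the whole argument through $S_\Gamma(\SL_m(\C))$ and Theorem~\ref{theorem:conformal_blocks_group_based_models_inclusion}, but the self-contained combinatorial reduction is cleaner.)

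First I would fix, for each inner vertex $v$, a bijection between the three edges of the tripod at $v$ and the three edges $\lambda,\mu,\nu$ of the dual triangle $\Delta_v$ — this is the data already encoded in the triangle complex dual to $\Gamma$ — so that the tripod-level correspondence of Theorem~\ref{theorem:BZ_triangles_group_based_models_inclusion} (unit vector $e_i\leftrightarrow$ fundamental weight $\omega_i$ on the matched edge) is unambiguous. The key observation, and essentially the only content at the graph level, is that the defining equations $\pr_e(x^{(v_1)})=\pr_e^*(x^{(v_2)})$ of $\BZ_\Gamma(\SL_m(\C))$ depend on each BZ triangle $x^{(v)}$ only through its projection $\pr(x^{(v)})$, because $\pr_e$ and $\pr_e^*$ are simply coordinate blocks of $\pr$ (the latter with the order reversed). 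Hence $\pr_\Gamma(\BZ_\Gamma(\SL_m(\C)))$ is precisely the set of tuples $(s^{(v)})_v$ with $s^{(v)}\in\pr(\BZ(\SL_m(\C)))$ and $\pr_e(s^{(v_1)})=\pr_e^*(s^{(v_2)})$ for all inner edges: any lift of such a tuple to a tuple of BZ triangles automatically lands in $\BZ_\Gamma(\SL_m(\C))$, since the constraints see nothing beyond the projections.

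Next I would match the gluing relations on the two sides: reversing the coordinates of a fundamental weight sends $\omega_i\mapsto\omega_{m-i}$, which under $\mathcal{W}_m/\mathcal{R}_m=\Z/m\Z$, $[\omega_i]=i$, is exactly the involution $[i]\mapsto[m-i]$, i.e. $a\mapsto-a$. So, transported through the tripod correspondence, the BZ gluing equation $\pr_e(x^{(v_1)})=\pr_e^*(x^{(v_2)})$ becomes exactly the group-based gluing equation $x_a^{\overline e}=x_{-a}^{\underline e}$ (both being involutive, the asymmetric roles of the two endpoints are immaterial). The argument then concludes formally: given $r=(r^{(v)})_v\in R^{\pr}_\Gamma(\Z/m\Z)$, Theorem~\ref{theorem:BZ_triangles_group_based_models_inclusion} gives $r^{(v)}\in R^{\pr}_{0,3}(\Z/m\Z)\subset\pr(\BZ(\SL_m(\C)))$ for each $v$, and Lemma~\ref{lemma:group_based_semigroup} says the $r^{(v)}$ satisfy the gluing relations, which by the previous step are exactly the constraints cutting out $\pr_\Gamma(\BZ_\Gamma(\SL_m(\C)))$; hence $r\in\pr_\Gamma(\BZ_\Gamma(\SL_m(\C)))$. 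For $m\in\{2,3\}$ one has $R^{\pr}_{0,3}(\Z/m\Z)=\pr(\BZ(\SL_m(\C)))$ by Theorem~\ref{theorem:BZ_triangles_group_based_models_inclusion}, and running the identical argument backwards yields the reverse inclusion, so the two semigroups coincide.

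The hard part will be purely organizational rather than mathematical: checking that the per-vertex tripod-to-triangle edge bijections can be chosen compatibly, so that a given inner edge of $\Gamma$ is matched consistently from both of its endpoints, and keeping the orientation conventions aligned (all projections taken clockwise, dual versus non-dual) so that $\pr_e$ at one end of an inner edge genuinely pairs with $\pr_e^*$ at the other end, in the same way on the BZ side and the group-based side. The substantive content — that a $\Z/m\Z$-labelling of a tripod is realized by an actual BZ triangle, and conversely for $m\le 3$ — is already packaged in Theorem~\ref{theorem:BZ_triangles_group_based_models_inclusion}; crucially, no interior data of the BZ triangles can obstruct the graph-level assembly, precisely because the gluing constraints only read off the boundary.
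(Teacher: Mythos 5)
Your proof is correct and takes essentially the same route as the paper's: reduce to the tripod case via Lemma~\ref{lemma:group_based_semigroup} and the definition of $\BZ_{\Gamma}(\SL_m(\C))$, then invoke Theorem~\ref{theorem:BZ_triangles_group_based_models_inclusion}. The paper is terser---it asserts the identity $\pr_{\Gamma}(\BZ_{\Gamma}(\SL_m(\C)))=\prod_{v}\pr(\BZ(\SL_m(\C))^{(v)})\cap\bigcap_{e=(v_1,v_2)}\ker(\pr_e(x^{(v_1)})-\pr_e^*(x^{(v_2)}))$ ``by definition''---whereas you supply the actual justification (the gluing constraints factor through the boundary projections, so any lifts of a compatible tuple automatically glue), which is exactly the right reason.
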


\begin{proof}
By definition, we have
\begin{equation*}
 \pr_{\Gamma}(\BZ_{\Gamma}(\SL_m{\C}))=\prod_{v\in V(\Gamma)}\pr(\BZ(\SL_m(\C))^{(v)})\cap\bigcap _{e=(v_1,v_2)} \ker (\pr_e(x^{(v_1)})-\pr_e^*(x^{(v_2)})).
\end{equation*}
By Theorem~\ref{theorem:BZ_triangles_group_based_models_inclusion} and Lemma~\ref{lemma:group_based_semigroup}, this set contains $R_{\Gamma}(\Z/m\Z)$. 
\end{proof}

The inclusion in Corollary~\ref{cor:connection_on_trees} is strict for $m> 3$, since the inclusion in Theorem~\ref{theorem:BZ_triangles_group_based_models_inclusion} is strict for $m> 3$.

We now let $\BZ^{\gr}(\SL_3(\C)) \subset \BZ(\SL_3(\C))\times \Z_{\geq 0}$ be the sub-semigroup generated by the eight BZ triangles in Figure~\ref{figure:BZ_rank_two} and the zero BZ triangle, lifted to height $1$.  This semigroup is studied in \cite{depth_rule,Manon12}, in the context of conformal field theory.  We can now state the proof of Theorem~\ref{BZ}.

\begin{proof}[Proof of Theorem~\ref{BZ}]
 By Theorem~\ref{theorem:BZ_triangles_group_based_models_inclusion}, the generators of $\BZ^{\gr}(\SL_3(\C))$ are in one to one correspondence with the degree one elements of $R_{0,3}(\Z/3\Z)=\overline{R}_{0,3}(\Z/3\Z)$.  This defines a linear map $\phi :\BZ^{\gr}(\SL_3(\C)) \to \overline{R}_{0,3}(\Z/3\Z)$ given by passing to the boundaries of these elements.   The result for general $\Gamma$ then follows by modification of the argument used in Corollary \ref{cor:connection_on_trees}.
\end{proof}

The assignment of height $1$ to each of the generators of $\BZ(\SL_3(\C))$ is a result of the conformal field theory discussed in Section \ref{section:conformal_blocks_II}.   This is not carried out for general $m$, because a generating set for each $\BZ(\SL_m(\C)),$ and a way to assign heights to elements of this semigroup in a way consistent with the conformal field theory is still unknown.  This is related to the problem of constructing a toric degeneration of $V_{0, 3}(\SL_m(\C)).$

\bibliographystyle{alpha}
\bibliography{KMv4}

\end{document}